\documentclass{article}
\usepackage{amsmath}
\usepackage{amssymb}
\usepackage{amsthm}
\usepackage[margin=1in, includefoot]{geometry}
\usepackage{relsize}
\usepackage[numbers]{natbib}
\usepackage[toc,page]{appendix}
\usepackage{parskip}
\usepackage{graphicx}
\usepackage{float}
\usepackage{xcolor}
\newtheorem{theorem}{Theorem}

\newtheorem{proposition}{Proposition}
\newtheorem{lemma}{Lemma}

\title{An Interior A Priori Estimate for Solutions to Monge-Amp\`ere Equations with Right-Hand Side Close to One}
\author{T. O'Neill, Dr B. Cheng}
\date{}
\begin{document}
\maketitle
\begin{abstract}
We consider Monge-Amp\'ere equations with the right hand side function close to a constant and from a function class that is larger than any H\"older class and smaller than the Dini-continuous class. We establish an upper bound for the modulus of continuity of the solution's second derivatives. This bound depends exponentially on a quantity similar to but larger than the Dini semi-norm. We establish explicit control on the shape of the sequence of shrinking sections, hence revealing the nature of such exponential dependence.
\end{abstract}
The goal of this work is to establish a priori estimates for the Monge-Amp\`ere equation with right hand side close to the constant $1$ and of regularity classes larger than any H\"older space. Our technique is inspired by Jian \& Wang in \cite{XJW2}, but we also reconcile the issue raised in Figalli et al in \cite{AFCM1} by obtaining explicit bounds to control the shape of the sequence of shrinking sections. To do this we make use of results and techniques present in the second edition of the book of Gutierrez \cite{CG1}. We use a much more general continuity condition, which encompasses H\"older continuous functions. \\ \\
For an open, bounded, convex set $\Omega\subset \mathbb{R}^n$ suppose that a strictly convex $v \in C^2(\Omega) \cap C^{0,1}(\overline{\Omega})$ satisfies the following Monge-Amp\`ere equation
\begin{align}
     \det \mathrm{D}^2v &= f \text{    in    }\Omega \label{MA1} \\
      v &=0 \text{    on    }\partial \Omega \label{MA2} \\
     0< 1-\varepsilon \leq f &\leq 1+\varepsilon \label{MA3} 
\end{align}
with $\varepsilon <1/2$ a positive constant. We require $f$ to satisfy a continuity assumption, namely:
\begin{equation}\label{condition}
  ||f||_{\mathcal{C}^{1/2}} :=  \left(\int_0^1 \frac{(\omega_f(r))^{1/2}}{r} \mathrm{d}r\right)^2 \equiv \left| \left|\frac{\omega_f(r)}{r^2}\right|\right|_{L^{1/2}([0,1])}<\infty,
\end{equation}
 where $\omega_f(r)$ the \textit{modulus of continuity} of $f$, given by
\begin{equation*}
    \omega_f(r) := \left\{ \begin{array}{ll}
      \sup_{|x-y|<r} \left\{ |f(x) - f(y)|\right\} & |r| \leq diam(\Omega) \\
      \sup_{|x-y|<diam(\Omega)} \left\{ |f(x) - f(y)|\right\} &\text{else}
      \end{array}\right.
\end{equation*} 
Apparently $\omega_f(r) \leq 2\varepsilon < 1$ for every $ r>0$. Note that $||\cdot||_{\mathcal{C}^{1/2}}$ is not a norm, merely a semi-norm, but we use the notation here for convenience. \\ \\
Also note that, although $f \in C^{0,\alpha}(\Omega)$ with $\alpha \in (0,1)$ will satisfy \eqref{condition}, with $\omega_f(r) \lesssim r^\alpha$, and hence $||f||_{\mathcal{C}^{1/2}} \lesssim (2/\alpha)^2$, this condition is more general than H\"older continuity. Suppose $f:(-1,1)\rightarrow \mathbb{R}$ is given by
\begin{equation*}
  f(x) = \left\{ \begin{array}{ll}
      0 & x \in (-1,0) \\
      \frac{1}{(-\ln(x))^{3}} &x \in[0,e^{-4}) \\
      \frac{1}{64} & x \in [e^{-4},1)
\end{array} 
\right. 
\end{equation*}
Then $f$ will have modulus of continuity of 
\begin{equation*}
  \omega_f(r) = \left\{ \begin{array}{ll}
      \frac{1}{(-\ln(r))^{3}} &r \in(0,e^{-4}) \\
       \frac{1}{64} &r \in [e^{-4},1)
\end{array} 
\right. 
\end{equation*}
Clearly $f$ cannot be H\"older continuous as 
\begin{equation*}
    \lim_{r\rightarrow 0^+}\frac{\omega_f(r)}{r^\alpha} = +\infty
\end{equation*}
for any $\alpha >0$. However, $f$ satisfies \eqref{condition}:
\begin{align*}
    \int_0^{e^{-4}}\frac{1}{(-\ln(r))^{3/2}}\frac{1}{r}\mathrm{d}r &= \int_{-\infty}^{-4}\frac{1}{(-z)^{3/2}}\mathrm{d}z <\infty.
\end{align*}
We now state our main result: \\
\begin{theorem}\label{mainresult}
Let $v \in C^2(\Omega) \cap C(\overline{\Omega})$ be a (strictly) convex solution of \eqref{MA1}, \eqref{MA2} in a convex set $\Omega$ with right hand side $f$ satisfying \eqref{MA3} and \eqref{condition} with $\varepsilon \leq \varepsilon_0$ which is a positive constant that only depends on n. Then, when restricted on $\Omega' \subset \subset \Omega$, every component of $\mathrm{D}^2v$ has modulus of continuity bounded as
\begin{equation}\label{mainestimate}
    \omega_{\mathrm{D}^2 v}(d) \leq C_0 K\left(K^{1/2} d + \int_0^{Kd} \frac{\omega_f(r)}{r}\mathrm{d}r+ K^{3/2}d \int_{Kd}^K\frac{\omega_f(r)}{r^2}\mathrm{d}r\right).
\end{equation}
with $0< d < 1$, 
\begin{equation}\label{mainconstant}
K := C_1\exp\left(C_2||f||^{1/2}_{\mathcal{C}^{1/2}}\right),
\end{equation}
and constants $C_0,C_1,C_2$ depending only on $n$, $\varepsilon_0$, $\Omega$ and dist$(\Omega', \partial \Omega)$.
\end{theorem}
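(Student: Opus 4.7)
The plan is to adapt the section-iteration method of Jian--Wang \cite{XJW2}, but to close the gap identified by Figalli et al.\ \cite{AFCM1} by importing the quantitative shape estimates of Gutiérrez \cite{CG1}. The underlying philosophy is that, at each scale, the rescaled equation is a small perturbation of $\det \mathrm{D}^2 w = 1$, whose solutions are explicitly quadratic; the deviation at scale $h$ is of size $\omega_f(h)^{\beta}$ for a natural exponent $\beta$, and the perturbation \eqref{condition} is precisely the condition that makes $\beta = 1/2$ summable along a geometric sequence of sections.

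More concretely, I would first fix $x_0\in\Omega'$ and use John's lemma together with Caffarelli's normalization to produce an affine map $T_0$ sending the section $S_{h_0}(x_0)$ of $v$ to a set pinched between two concentric balls of comparable radii. I would then build a sequence of sections $S_{h_k}(x_0)$ with $h_k\sim \lambda^k h_0$ for a fixed small $\lambda\in(0,1)$, and at each step compare the normalized rescaling of $v$ on $S_{h_k}$ with the (unique) solution $w_k$ of $\det \mathrm{D}^2 w_k = 1$ agreeing with $v$ on $\partial S_{h_k}$. Standard stability of the Monge--Amp\`ere operator, together with \eqref{MA3} and the modulus $\omega_f$, yields $\|v-w_k\|_{L^{\infty}} \lesssim \omega_f(h_k)$ on the normalized section. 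Because $w_k$ is smooth with an explicit quadratic profile, this closeness transfers to closeness of level sets, which in turn yields an affine map $A_{k}$ passing from the normalization at scale $h_k$ to the one at $h_{k+1}$, with
\[
\log\|A_k\| + \log\|A_k^{-1}\| \;\lesssim\; \omega_f(h_k)^{1/2}.
\]

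Composing and telescoping, the eccentricity of the $N$-th normalized section is controlled by
\[
\exp\!\Bigl(C\sum_{k\leq N}\omega_f(h_k)^{1/2}\Bigr) \;\lesssim\; \exp\!\Bigl(C'\!\int_0^1\!\frac{\omega_f(r)^{1/2}}{r}\,\mathrm{d}r\Bigr) \;=\; \exp\!\bigl(C'\|f\|_{\mathcal{C}^{1/2}}^{1/2}\bigr),
\]
which is exactly the constant $K$ in \eqref{mainconstant}. To reach \eqref{mainestimate}, given $d\in(0,1)$ I would pick $N$ so that the section at scale $h_N$ has inner radius $\sim d$; the resulting diameter is $\sim Kd$, explaining the cutoff in the integrals. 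The three terms of \eqref{mainestimate} arise naturally: $K^{1/2}d$ from the rescaled baseline oscillation of $\mathrm{D}^2 w_N$, the integral $\int_0^{Kd}\omega_f/r\,\mathrm{d}r$ from summing residual errors inside the last section, and the weighted tail $K^{3/2}d\int_{Kd}^K\omega_f(r)/r^2\,\mathrm{d}r$ from pulling back those errors through the accumulated affine normalizations.

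The hard part is \textbf{Step 3}, the explicit shape estimate $\log\|A_k\|+\log\|A_k^{-1}\|\lesssim\omega_f(h_k)^{1/2}$. In \cite{XJW2} the analogous bound was asserted only qualitatively, and \cite{AFCM1} observed that without an \emph{a priori} quantitative version one cannot rule out geometric degeneration of the sections as $k\to\infty$. The fix is to use the engulfing property and the Aleksandrov-type normalization lemmas from \cite{CG1} to convert the $L^\infty$-closeness between $v$ and $w_k$ into an explicit Hausdorff-distance bound on their level sets, with square-root loss (hence the exponent $1/2$, which dictates the choice of the class $\mathcal{C}^{1/2}$). Controlling this step is where all the honest work of the proof lives; once it is in hand, the summation and scaling arguments above are essentially bookkeeping.
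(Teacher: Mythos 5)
Your proposal captures the architecture of the paper's proof: iterate along geometrically shrinking sections, compare at each scale with the solution of $\det\mathrm{D}^2 w=1$, extract an incremental affine normalization, and sum the incremental eccentricities to produce the constant $K$. The $1/2$ exponent you identify (arising from the square-root loss when converting $L^\infty$-closeness into geometric closeness of level sets) is indeed the reason the class $\mathcal{C}^{1/2}$ appears, and the three terms in \eqref{mainestimate} are attributed to the right sources.

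There is, however, a genuine gap at precisely the step you label ``bookkeeping.'' You assert
\[
\log\|A_k\| + \log\|A_k^{-1}\| \lesssim \omega_f(h_k)^{1/2},
\]
and then sum. But $\omega_f(h_k)$ is not the right quantity: the relevant oscillation $\delta_k$ is taken over the \emph{actual} section $S_{\mu^k,v}$, whose diameter is $\widetilde{C}_k\,\mu^{k/2}$ where $\widetilde{C}_k$ is the cumulative eccentricity of $A_1,\dots,A_k$. So in fact $\delta_k\le\omega_f(\widetilde{C}_k\mu^{k/2})$, and $\widetilde{C}_k$ itself is controlled by $\exp(C\sum_{i<k}\delta_i^{1/2})$. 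Your estimate is therefore self-referential: the eccentricity you want to bound appears inside the modulus of continuity on the right-hand side. This circularity is precisely the issue raised by Figalli--Jhaveri--Mooney in \cite{AFCM1}, and closing it is the core contribution of the paper, not a corollary of the shape estimate. The resolution requires (i) the explicit recursion $\sigma_k=\hat c\,\mu^{-1/2}(\sigma_{k-1}\mu+\delta_{k-1}^{1/2})$, which carries a memory term you omit; (ii) the monotonicity trick $\omega_f(C_j\mu^{j/2})\le\omega_f(C_k\mu^{j/2})$ and a telescoping integral to obtain the a priori inequality $\ln C_k\le C_\mu'+C_\mu\int_0^{C_k}\omega_f^{1/2}(s)\,s^{-1}\,\mathrm{d}s$; and (iii), crucially, the smallness hypothesis on $\varepsilon_0$ (hence $\delta_0$) so that $C_\mu\delta_0^{1/2}\le1/2$ allows the inequality $\ln C_k\le C_\mu'+C_\mu\|f\|_{\mathcal{C}^{1/2}}^{1/2}+C_\mu\delta_0^{1/2}\ln C_k$ to be absorbed and solved for $\ln C_k$. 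Without this bootstrap, the sum $\sum\delta_k^{1/2}$ is not a priori controlled by $\|f\|_{\mathcal{C}^{1/2}}^{1/2}$ and the whole chain of estimates does not close. A second, smaller omission: your sketch works only at a single base point, so it cannot directly produce a modulus of continuity; comparing $\mathrm{D}^2 v(z)$ to $\mathrm{D}^2 v(0)$ requires the off-centered solutions $w_{z,l}$ and the Caffarelli engulfing bound $l_k\le k+l_0$ to align the two families of sections, and a rescaling by $f(z)^{1/n}$ to make the constant right-hand sides match before Lemma \ref{Lowerorderestimates} applies.
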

This result is different to that of \cite[Theorem 1]{XJW2}, with details included in our proof to address some of the issues of the final estimate presented by Figalli, Jhaveri \& Mooney in \cite{AFCM1}. The estimate in \cite[Theorem 1]{XJW2} would provide us with a linear dependence on $||f||_{C^\alpha}$ when applied to the particular case of the right hand side being H\"older continuous, something that is shown to not be the case in \cite[Theorem 1.3]{AFCM1}. The main issue is as follows: Proofs of \cite{XJW2} require a sequence of affine (forward) transformations applied to a sequence of so-called sections (see \eqref{section}), each of which is followed by a corresponding reverse transformation. Each forward transformation reduces a given section's eccentricity (loosely speaking, the section is of convex shape and is approximated as an ellipsoid); then the eccentricity of the smaller section next in the sequence is estimated. Since the ``long" and ``short" axis of the smaller section do not necessarily align with those of the bigger section, when the reverse transformation brings the bigger section back to its original shape, the eccentricity of the smaller section may be further increased. We resolve this issue by accumulating the eccentricity of these affine transformations, and establish explicit bounds on them. \\ \\
The paper is organised as follows: We begin by stating classical results concerning Monge-Amp\`ere equations, before refining some results concerning sections of Monge-Amp\`ere solutions with right hand side close to $1$ in $L^\infty$ norm in Section \ref{sections}, with techniques inspired by Gutierrez in \cite{CG1}. We also prove a result considering the geometry of these sections, and how the closeness of the right hand side to $1$ gives us appropriate control of the eccentricity of these sections. In other words, we may use them in a similar way that Euclidean balls are used for estimates of the Poisson equation (see, for example, \cite{GTB}). We then give a proof of our main result in Section \ref{BigProof}, using techniques from \cite{XJW2} and also using results inspired by \cite{DM1} concerning shrinking sections of Monge-Amp\`ere solutions. \\ \\
\newpage
\section{Prior Results on Monge-Amp\`ere Equations}\label{Knownstuff}
We begin with some basic properties of sections and normalised convex sets, starting with John's Lemma:
\begin{lemma}[John's Lemma]
If $\Omega \subset \mathbb{R}^n$ is a bounded convex set with nonempty interior and let $E$ be the ellipsoid of minimum volume containing $\Omega$. Then
\begin{equation*}
n^{-1} E \subset \Omega \subset E,
\end{equation*}
where $n^{-1} E$ denotes the $n^{-1}$-dilation of $E$ with respect to its center. This ellipsoid is unique.
\end{lemma}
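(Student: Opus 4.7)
The plan is to separately prove existence, uniqueness, and the inclusion $n^{-1}E \subset \Omega$. For existence, I would parametrise ellipsoids containing $\Omega$ by a centre $c \in \mathbb{R}^n$ and a positive-definite matrix $M$, writing $E_{c,M} := \{x : \langle M(x-c), x-c\rangle \leq 1\}$. Since $\Omega$ is bounded with nonempty interior, the admissible parameter set with $\mathrm{vol}(E_{c,M}) \le V_0$ (for $V_0$ the volume of any fixed circumscribed ellipsoid, e.g.\ a large ball) is compact: centres lie in a bounded region and the eigenvalues of $M$ are bounded above and below. The volume $\omega_n / \sqrt{\det M}$ is continuous in $(c,M)$, so the infimum is attained.

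For uniqueness, suppose $E_1, E_2$ are both minimisers. The functional $(x, M, c) \mapsto \langle M(x-c), x-c\rangle$ is jointly convex, so the ellipsoid $E_t$ with matrix $M_t = (1-t)M_1 + tM_2$ and centre $c_t = (1-t)c_1 + tc_2$ satisfies $E_t \supset E_1 \cap E_2 \supset \Omega$. The Minkowski determinant inequality $(\det M_t)^{1/n} \geq (1-t)(\det M_1)^{1/n} + t(\det M_2)^{1/n}$, with equality only if $M_1 = M_2$, forces $M_1 = M_2$. Writing $E_i = c_i + E_0$ with common shape $E_0$, a short computation expanding $\langle M(x-c_1), x-c_1\rangle + \langle M(x-c_2), x-c_2\rangle$ then shows $\Omega$ is contained in a strict contraction of $E_{(c_1+c_2)/2,\, M}$ whenever $c_1 \neq c_2$, again contradicting minimality.

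For the inclusion, I first apply an affine transformation to reduce to $E = B_1$, the unit ball centred at the origin; this preserves both minimality and ratios of volumes. Suppose for contradiction that some $p$ with $|p| < 1/n$ lies outside $\Omega$. Hahn-Banach separates $p$ from the closure of $\Omega$ by a hyperplane; after rotation I may assume $\Omega \subset \{x_1 \geq a\}$ for some $a > -1/n$, so $\Omega \subset B_1 \cap \{x_1 \geq a\}$. I then search for a smaller circumscribed ellipsoid of revolution about the $x_1$-axis,
\[
E' = \left\{x : \frac{(x_1-c)^2}{A^2} + \frac{x_2^2 + \cdots + x_n^2}{B^2} \leq 1\right\},
\]
imposing that $E'$ contain the pole $(1, 0, \ldots, 0)$ and the equator $\{x_1 = a,\; |x'|^2 = 1 - a^2\}$ as boundary points. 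These two conditions determine $A$ and $B$ explicitly in terms of $c$, and I then minimise the volume $\omega_n A B^{n-1}$ over $c$. A direct calculation shows that the optimal value is strictly less than $\omega_n$ precisely when $a > -1/n$, producing the required contradiction.

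The main obstacle is this last optimisation, which must both produce the sharp constant $1/n$ and confirm that the candidate ellipsoid $E'$ genuinely contains all of $B_1 \cap \{x_1 \geq a\}$ rather than merely the pole and equator used to construct it. The containment reduces to a one-variable convexity check for the quadratic defining $E'$ restricted to $\partial B_1 \cap \{x_1 \ge a\}$, which is straightforward but must be carried out carefully. The constant $1/n$ then emerges from the first-order optimality condition in $c$, essentially because the volume product involves $B$ raised to the power $n-1$ while only $A$ absorbs the asymmetry forced by the slab constraint.
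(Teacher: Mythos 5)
The paper does not prove John's Lemma; it is stated as a classical result, so there is no in-paper argument to compare against. Reviewing your proposal on its own merits: the existence step (compactness of the admissible set and continuity of volume) is sound, and the sketch for the inclusion $n^{-1}E\subset\Omega$ is the standard direct-optimisation argument and is essentially correct, with the crux being exactly the containment check you flag -- one finds that the constructed ellipsoid of revolution has semi-axes satisfying $A\le B$ precisely when $a\ge -1/n$, so the restriction of the quadratic to $\partial B_1\cap\{x_1\ge a\}$ is convex in $x_1$ and the inclusion follows, while for $a<-1/n$ the candidate ellipsoid bulges out and fails to contain the cap; this is how the sharp constant $1/n$ emerges.

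The uniqueness step, however, has a genuine error. You claim that $(M,c)\mapsto\langle M(x-c),x-c\rangle$ is jointly convex and conclude that the ellipsoid with matrix $M_t=(1-t)M_1+tM_2$ and centre $c_t=(1-t)c_1+tc_2$ contains $E_1\cap E_2$. Both claims are false. Already in one dimension the Hessian of $(m,c)\mapsto m(x-c)^2$ is $\bigl(\begin{smallmatrix}0 & -2(x-c)\\ -2(x-c)& 2m\end{smallmatrix}\bigr)$, which has negative determinant whenever $c\ne x$, so the map is not convex. Correspondingly the containment fails: take $n=1$, $M_1=4$, $c_1=0$ (so $E_1=[-\tfrac12,\tfrac12]$), $M_2=1$, $c_2=-\tfrac12$ (so $E_2=[-\tfrac32,\tfrac12]$), and $t=\tfrac12$; then $M_t=\tfrac52$, $c_t=-\tfrac14$, and at $x=\tfrac12\in E_1\cap E_2$ one has $M_t(x-c_t)^2=\tfrac{45}{32}>1$, so $x\notin E_t$. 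The correct device, used in the classical proof, is to average the quadratic forms themselves: define $E$ by $\tfrac12 q_1(x)+\tfrac12 q_2(x)\le 1$ where $q_i(x)=\langle M_i(x-c_i),x-c_i\rangle$. This set manifestly contains $E_1\cap E_2\supset\Omega$, and after completing the square it is an ellipsoid with matrix $\tfrac12(M_1+M_2)$ but with centre $M_t^{-1}\bigl(\tfrac12(M_1c_1+M_2c_2)\bigr)$ (not the average of $c_1,c_2$) and with ``radius-squared'' strictly less than $1$ whenever $c_1\ne c_2$. Combining this strict shrinkage with the Minkowski determinant inequality then yields a strictly smaller circumscribed ellipsoid unless $M_1=M_2$ and $c_1=c_2$. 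Your Minkowski and parallelogram-law observations are the right ingredients; it is only the parametrisation of the interpolating ellipsoid that needs to be corrected before those ingredients can be applied.
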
 
If $E = B_1(0)$, the unit ball, we say that $\Omega$ is \textit{normalised}. For any minimum ellipsoid $E$ there exists an affine transformation $T$ such that maps $E$ to the unit ball, whence normalises $\Omega$. While many results in literature are stated under the assumption of a normalised domain,we remark, however, that the minimality of $E$ is not necessary for these arguments to work. The essential assumption is that the original domain can be affine transformed to one that contains $n^{-1}B_1(0)$ and is contained in $B_1(0)$. This has been the case since Caffarelli's strict convexity result in \cite{LC4} and interior regularity result in \cite{LC5}. \\ \\
Any two balls that sandwich the domain in this fashion will be respectively called the ``inner" and ``outer" balls from now on. In fact, thanks to the affine invariance of the Monge-Amp\`ere equation, it is the ratio of the radii of the inner and outer balls that really matters. \\ \\
Another important result that we'll need here is the Alexandrov Maximum Principle, applied in this case to smooth solutions $u$:
\begin{lemma}[Alexandrov Maximum Principle]
If $\Omega \subset \mathbb{R}^n$ is a bounded, open and convex set and $u \in C^2(\Omega)\cap C(\overline{\Omega})$ is convex with $u = 0$ on $\partial \Omega$ then
\begin{equation}\label{Alexandrov}
|u(x_0)|^n \leq C diam(\Omega)^{n-1}dist(x_0,\partial \Omega)\int_\Omega \det\mathrm{D}^2u(x)\mathrm{d}x,
\end{equation}
for all $x_0 \in \Omega$, with constant $C$ depending only on $n$.
\end{lemma}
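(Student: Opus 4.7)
The plan is to employ the normal mapping (subdifferential) technique, reducing \eqref{Alexandrov} to a geometric volume computation for an auxiliary supporting cone. Fix $x_0 \in \Omega$; since $u$ is convex with $u=0$ on $\partial\Omega$ we have $u \leq 0$ in $\Omega$, and the estimate is trivial if $u(x_0)=0$, so I may suppose $u(x_0)<0$. Introduce $w : \overline{\Omega} \to \mathbb{R}$ as the largest convex function with $w(x_0) \leq u(x_0)$ and $w \leq 0$ on $\partial\Omega$, equivalently
\[
w(x) = \sup\{L(x) : L \text{ affine},\ L(x_0) \leq u(x_0),\ L \leq 0 \text{ on } \partial\Omega\}.
\]
The proof will chain three ingredients: a subdifferential inclusion for $w$ versus $u$, a geometric lower bound on $|\partial w(x_0)|$, and the change-of-variables bound on $|\nabla u(\Omega)|$.

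First I would establish the inclusion $\partial w(x_0) \subset \nabla u(\Omega)$. Given $p \in \partial w(x_0)$, the affine function $L(x) := u(x_0) + p\cdot(x-x_0)$ satisfies $L \leq w \leq 0 = u$ on $\partial\Omega$ and $L(x_0) = u(x_0)$. Hence $\phi := u - L$ belongs to $C^2(\overline{\Omega})$, is non-negative on $\partial\Omega$, and vanishes at $x_0$, so its minimum over $\overline{\Omega}$ is attained at some interior point $y$ where $\nabla\phi(y)=0$, giving $p = \nabla u(y) \in \nabla u(\Omega)$.

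Next I would extract the lower bound on $|\partial w(x_0)|$. Because $L$ is affine and $w$ is the supremum of affine minorants, $L \leq w$ is equivalent to $L \leq 0$ on $\partial\Omega$, so
\[
\partial w(x_0) = \{p \in \mathbb{R}^n : p\cdot(x-x_0) \leq |u(x_0)| \text{ for all } x \in \Omega\},
\]
i.e.\ $|u(x_0)|^{-1}\partial w(x_0)$ is the polar body of $\Omega - x_0$. Let $e$ be the unit vector from $x_0$ to a nearest boundary point $x^\star$, and set $d = \mathrm{dist}(x_0,\partial\Omega)$, $R = \mathrm{diam}(\Omega)$. The hyperplane through $x^\star$ normal to $e$ supports $\Omega$, so $(x-x_0)\cdot e \leq d$ on $\Omega$, which forces $p^\star := (|u(x_0)|/d)\,e \in \partial w(x_0)$; meanwhile every $q \perp e$ with $|q|\leq |u(x_0)|/R$ also lies in $\partial w(x_0)$ by Cauchy--Schwarz. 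By convexity, $\partial w(x_0)$ contains the cone with apex $p^\star$ and base the perpendicular $(n-1)$-disk of radius $|u(x_0)|/R$, yielding
\[
|\partial w(x_0)| \geq c_n\,\frac{|u(x_0)|^n}{R^{n-1}\,d}.
\]

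To close, the area formula applied to the smooth map $\nabla u$, combined with $\det \mathrm{D}^2 u \geq 0$ from convexity, gives $|\nabla u(\Omega)| \leq \int_\Omega \det \mathrm{D}^2 u\,dx$; chaining the three bounds produces \eqref{Alexandrov}. The main obstacle will be the asymmetric volume estimate in the previous paragraph: merely inscribing a ball of radius $|u(x_0)|/R$ would yield only the weaker factor $R^n$ in place of $R^{n-1}d$, so one must exploit \emph{both} the short direction $e$ (where the supporting hyperplane at $x^\star$ supplies a factor $1/d$) and the transverse directions (where only the diameter bound $1/R$ is available) simultaneously. Building the asymmetric cone inside the polar of $\Omega - x_0$ is the essential geometric input.
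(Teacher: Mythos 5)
The paper does not prove this lemma; it quotes the Alexandrov maximum principle as a classical result (it is, for instance, Theorem 1.4.2 of Guti\'errez's book \cite{CG1}). Your proposal is a correct reconstruction of that standard proof: replace $u$ by the convex cone $w$ with vertex $(x_0,u(x_0))$ and base $\partial\Omega\times\{0\}$ (your supremum-of-affine-minorants definition does produce exactly this cone, since $\Omega$ is convex and $u(x_0)<0$), observe $\partial w(x_0)\subset\nabla u(\Omega)$ by the sliding-plane argument, compute the volume of the polar body $|u(x_0)|^{-1}\partial w(x_0)=(\Omega-x_0)^{\circ}$ from below by inscribing the asymmetric cone over the $(n-1)$-disk of radius $|u(x_0)|/\mathrm{diam}(\Omega)$ with apex $(|u(x_0)|/\mathrm{dist}(x_0,\partial\Omega))\,e$, and finally bound $|\nabla u(\Omega)|$ by the Monge--Amp\`ere mass via the change-of-variables formula. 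The only small points worth making explicit are that $w=0$ on $\partial\Omega$ (so that the supporting-plane characterization of $\partial w(x_0)$ reduces to boundary data) and that the supporting hyperplane of $\Omega$ at the nearest boundary point $x^\star$ has outer normal $e$ because the inscribed ball $B_{d}(x_0)\subset\Omega$ is tangent to $\partial\Omega$ there; both are implicit in what you wrote and are easily supplied. No gap.
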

We also have the following comparison principle:
\begin{lemma}[Comparison Principle]
If $\Omega \subset \mathbb{R}^n$ is a bounded, open and convex domain and $u,v \in C^2(\Omega)\cap C(\overline{\Omega})$ are convex with $u\geq v$ on $\partial \Omega$. If
\begin{equation*}
    \det \mathrm{D}^2u(x) \leq \det \mathrm{D}^2v(x) \qquad \text{in}\quad \Omega,
\end{equation*}
then $u(x) \geq v(x)$ in $\Omega$. 
\end{lemma}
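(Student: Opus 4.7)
The plan is a contradiction argument combined with a small perturbation. The key linear-algebraic fact is that for symmetric positive semi-definite matrices $A \geq B \geq 0$, determinant monotonicity gives $\det A \geq \det B$ (by Weyl's inequality, since $A \geq B$ implies $\lambda_k(A) \geq \lambda_k(B)$ for every $k$), and moreover
\begin{equation*}
\det(B + \tau I) \;=\; \prod_{i=1}^n \bigl(\lambda_i(B) + \tau\bigr) \;\geq\; \det B + \tau^n
\end{equation*}
for every $\tau > 0$, by expanding the product and discarding the nonnegative cross terms. This is the quantitative gap that I will exploit.

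Fix $M \geq \sup_{x \in \overline{\Omega}} |x|^2$ and, for each $\epsilon > 0$, define $v_\epsilon(x) := v(x) + \epsilon(|x|^2 - M)$. Then $v_\epsilon \leq v$ on $\overline{\Omega}$, so $v_\epsilon \leq v \leq u$ on $\partial\Omega$; the function $v_\epsilon \in C^2(\Omega) \cap C(\overline{\Omega})$ is convex since $\mathrm{D}^2 v_\epsilon = \mathrm{D}^2 v + 2\epsilon I$ is positive definite; and the matrix fact above yields $\det \mathrm{D}^2 v_\epsilon \geq \det \mathrm{D}^2 v + (2\epsilon)^n > \det \mathrm{D}^2 u$ \emph{strictly} on $\Omega$. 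Suppose now, for contradiction, that $u < v_\epsilon$ somewhere in $\Omega$. Since $u - v_\epsilon \in C(\overline{\Omega})$ with $u - v_\epsilon \geq 0$ on $\partial\Omega$, it attains a negative minimum at some interior point $x_0$. The second-order necessary condition for a minimum gives $\mathrm{D}^2 u(x_0) \geq \mathrm{D}^2 v_\epsilon(x_0)$ as symmetric matrices; both sides are positive definite, so determinant monotonicity yields $\det \mathrm{D}^2 u(x_0) \geq \det \mathrm{D}^2 v_\epsilon(x_0)$, contradicting the strict inequality just established. Hence $u \geq v_\epsilon$ throughout $\Omega$, and sending $\epsilon \to 0$ gives $u \geq v$.

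The only delicate point I anticipate is recognising why the perturbation is necessary at all: at an interior minimum of $u - v$ itself, one would only deduce $\det \mathrm{D}^2 u \geq \det \mathrm{D}^2 v$, which is \emph{consistent with} rather than \emph{contradictory to} the hypothesis $\det \mathrm{D}^2 u \leq \det \mathrm{D}^2 v$. Replacing $v$ by $v_\epsilon$ upgrades this soft comparison into the required strict gap while preserving the boundary ordering; the choice $\epsilon(|x|^2 - M)$ is convenient because $|x|^2$ has constant positive-definite Hessian and the subtracted constant $M$ guarantees the correct sign on $\partial\Omega$. No viscosity-solution machinery intrudes since the $C^2$ regularity of both $u$ and $v$ is part of the hypothesis, so the pointwise second-order test at $x_0$ is classical.
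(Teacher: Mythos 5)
Your proof is correct. The paper itself does not prove this lemma --- it is stated without argument among the ``Prior Results'' as a classical fact, implicitly citing Guti\'errez's book. Your perturbation argument (replacing $v$ by $v_\epsilon = v + \epsilon(|x|^2 - M)$, noting $\det(\mathrm{D}^2 v + 2\epsilon I) \geq \det \mathrm{D}^2 v + (2\epsilon)^n$ to obtain a strict determinant gap, and then contradicting the second-order necessary condition at an interior negative minimum of $u - v_\epsilon$) is precisely the standard textbook proof of the comparison principle for $C^2$ convex functions, and every step is sound: the minimum must be interior because $u - v_\epsilon \geq 0$ on $\partial\Omega$, the matrix monotonicity $A \geq B \geq 0 \Rightarrow \det A \geq \det B$ is justified by eigenvalue monotonicity, and the passage $\epsilon \to 0$ is elementary. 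Nothing to add.
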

\subsection{Monge-Amp\`ere Equations with Constant Right Hand Side}
We now discuss some known results concerning the Monge-Amp\`ere equation with $f\equiv 1$. We begin this subsection with two lemmas from \cite{XJW2}. The first compares higher derivatives of two different solutions of \eqref{MAconstant} sufficiently ``close" together, and the second provides us with an a priori $C^4$ estimate for solutions to \eqref{MAconstant}:
\begin{lemma} \label{Lowerorderestimates}
Let $w_1, w_2$ be two convex solutions of $\det \mathrm{D}^2w_l = 1$ in $\Omega$. Suppose $||w_l||_{C^4(\Omega)} \leq C_0$. Then, if $|w_1 - w_2| \leq \delta$ in $\Omega$ for some constant $\delta >0$ we have, for $i = 1,2,3$,
\begin{equation*} 
|\mathrm{D}^i (w_1 - w_2)| \leq C\delta \quad \text{   in   } \Omega',
\end{equation*}
with the constant $C>0$ depending only on $n$ and $\text{dist}(\Omega',\partial\Omega)$.
\end{lemma}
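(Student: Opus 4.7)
The natural route is to linearise the Monge-Amp\`ere operator along the segment joining $w_1$ and $w_2$, thereby reducing the problem to a linear elliptic one. Set $u := w_1 - w_2$. Applying the algebraic identity
\begin{equation*}
\det A - \det B = \int_0^1 \mathrm{tr}\bigl(\mathrm{cof}(tA+(1-t)B)\,(A-B)\bigr)\,\mathrm{d}t
\end{equation*}
with $A = \mathrm{D}^2 w_1$ and $B = \mathrm{D}^2 w_2$, the equation $\det \mathrm{D}^2 w_1 = \det \mathrm{D}^2 w_2 = 1$ reduces to $a^{ij}(x)\,\partial_{ij} u = 0$, where
\begin{equation*}
a^{ij}(x) := \int_0^1 \mathrm{cof}\bigl(t\,\mathrm{D}^2 w_1(x) + (1-t)\,\mathrm{D}^2 w_2(x)\bigr)^{ij}\,\mathrm{d}t.
\end{equation*}

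The next task is to verify uniform ellipticity and coefficient regularity for this linear equation. The hypothesis $\|w_l\|_{C^4} \leq C_0$ gives $\mathrm{D}^2 w_l \leq C_0 I$, and combining with $\det \mathrm{D}^2 w_l = 1$ forces every eigenvalue of $\mathrm{D}^2 w_l$ to lie in $[C_0^{1-n}, C_0]$. These two-sided bounds pass to the convex combinations $t\mathrm{D}^2 w_1 + (1-t)\mathrm{D}^2 w_2$, and hence (up to a rearrangement of eigenvalues) to their cofactor matrices, and integration in $t$ preserves them. Thus $a^{ij}$ is uniformly elliptic with constants depending only on $n$ and $C_0$; the $C^4$ regularity of $w_l$ further gives $a^{ij} \in C^2(\Omega)$ with norms controlled in terms of $C_0$.

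Now $u$ satisfies a linear homogeneous uniformly elliptic equation with $C^2$ coefficients and $\|u\|_{L^\infty(\Omega)} \leq \delta$. Classical interior estimates for such equations---for instance the $L^\infty$-to-$C^{1,\alpha}$ estimate in Gilbarg--Trudinger, followed by differentiating the equation once and then twice and iterating interior Schauder estimates---yield $\|\mathrm{D}^i u\|_{L^\infty(\Omega')} \leq C\delta$ for $i = 1,2,3$, with $C$ depending only on $n$, $C_0$ and $\mathrm{dist}(\Omega', \partial\Omega)$. The linear dependence on $\delta$ comes from linearity of the equation in $u$ combined with the homogeneity of the estimates in $\|u\|_{L^\infty}$.

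The only step requiring genuine thought is the uniform-ellipticity verification for $a^{ij}$: without exploiting $\det \mathrm{D}^2 w_l = 1$ to transmit the upper bound on $\mathrm{D}^2 w_l$ into a lower bound, the matrix $tA + (1-t)B$ could degenerate along the segment and destroy ellipticity. Once this eigenvalue pinching is secured, everything else reduces to textbook linear elliptic theory applied to $u$.
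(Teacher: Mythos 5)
The paper states this lemma with a citation to Jian \& Wang \cite{XJW2} and provides no proof of its own, so there is no ``paper's proof'' to compare against; the linearisation route you take is, however, the standard argument in the literature for exactly this comparison estimate. Your proof is correct: the integral representation of $\det A - \det B$, the eigenvalue pinching ($D^2 w_l \leq C_0 I$ together with $\det D^2 w_l = 1$ forces eigenvalues into $[C_0^{1-n}, C_0]$, which passes to convex combinations and yields two-sided bounds on cofactors), the $C^2$ regularity of the coefficients $a^{ij}$ from $\|w_l\|_{C^4} \leq C_0$, and the iterated interior Schauder estimates (differentiating $a^{ij}\partial_{ij}u = 0$ once to control $D^3 u$) all hold. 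Two small wording points: the constant $C$ also depends on $C_0$, which the lemma's statement suppresses because $C_0$ is itself universal in the application; and the cofactor eigenvalues are not literally ``rearrangements'' of the original ones but products of $(n-1)$ of them, though your two-sided-bound conclusion is exactly right.
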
 
\begin{lemma} \label{C4Bound}
Let $\Omega$ be a bounded convex domain in $\mathbb{R}^n$. Let $w$ be a convex solution of 
\begin{align}
  \det \mathrm{D}^2w &= 1 \text{   in   }\Omega \label{MAConst1}\\
  w&=0 \text{   on   }\partial \Omega. \label{MAConst2}
\end{align} 
If $B_{n^{-1}}(0) \subset \Omega \subset B_1(0)$, then for any $\Omega' \subset \subset \Omega$, there is a constant $C >0$ depending only on $n$ and $dist(\Omega',\partial\Omega)$ such that
\begin{equation*}
\label{C4est}
||w||_{C^4(\Omega')} \leq C.
\end{equation*}
\end{lemma}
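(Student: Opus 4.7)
The strategy is the classical bootstrap sequence Pogorelov $\to$ Evans--Krylov $\to$ Schauder, applied to the equation $\det \mathrm{D}^2 w = 1$ on shrinking subdomains $\Omega' \subset\subset \Omega''' \subset\subset \Omega'' \subset\subset \Omega$. The normalisation $B_{n^{-1}}(0)\subset \Omega\subset B_1(0)$ will be used only to obtain universal $L^\infty$ and Lipschitz control on $w$ in a dimensionally quantified way.

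First I would establish an interior $L^\infty$ bound on $w$ and $\mathrm{D} w$. Applying the Alexandrov Maximum Principle \eqref{Alexandrov} with $\det \mathrm{D}^2 w = 1$, $|\Omega|\leq |B_1(0)|$ and $\mathrm{diam}(\Omega)\leq 2$ gives $|w(x)|^n \leq C(n)\,\mathrm{dist}(x,\partial\Omega)$, so $\|w\|_{L^\infty(\Omega)}\leq C(n)$. Since $w$ is convex and vanishes on $\partial\Omega$, on any $\Omega''\subset\subset\Omega$ the gradient satisfies $|\mathrm{D} w|\leq \|w\|_{L^\infty}/\mathrm{dist}(\Omega'',\partial\Omega)\leq C(n,\mathrm{dist}(\Omega'',\partial\Omega))$. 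Next, I would invoke Pogorelov's interior second derivative estimate for $\det \mathrm{D}^2 w = 1$: the auxiliary function $(-w)^{\beta}\, w_{ee}\, e^{\mu|\mathrm{D} w|^2/2}$ attains its maximum in the interior, and the maximum principle together with the already established bounds on $w$ and $\mathrm{D} w$ yields $\|\mathrm{D}^2 w\|_{L^\infty(\Omega'')} \leq C$, with $C$ depending only on $n$ and $\mathrm{dist}(\Omega'',\partial\Omega)$.

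Once $\mathrm{D}^2 w$ is uniformly bounded above on $\Omega''$, the equation $\det\mathrm{D}^2 w = 1$ forces each eigenvalue to be bounded below by a positive dimensional constant. The equation is therefore uniformly elliptic on $\Omega''$, and it can be written in the concave form $F(\mathrm{D}^2 w) = (\det\mathrm{D}^2 w)^{1/n} = 1$. Evans--Krylov then gives $\|w\|_{C^{2,\alpha}(\Omega''')}\leq C$. Differentiating $\det \mathrm{D}^2 w = 1$ in an arbitrary direction $e$, the function $u=\partial_e w$ solves the linear equation $a^{ij}\partial_{ij}u = 0$, where $a^{ij}$ are the cofactors of $\mathrm{D}^2 w$, now of class $C^{\alpha}$ with uniformly elliptic ellipticity constants. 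Interior Schauder estimates provide $u\in C^{2,\alpha}$, hence $w\in C^{3,\alpha}$; one more application of Schauder to the equation satisfied by second directional derivatives yields $w\in C^{4,\alpha}(\Omega')\subset C^{4}(\Omega')$ with the claimed quantitative dependence.

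The main obstacle is purely bookkeeping: ensuring that every intermediate constant (the Alexandrov $L^\infty$ bound, the gradient bound, the Pogorelov exponential factor $e^{\mu|\mathrm{D} w|^2/2}$, and the Evans--Krylov and Schauder constants) depends only on $n$ and on $\mathrm{dist}(\Omega',\partial\Omega)$. This is why the chain uses strictly nested subdomains $\Omega'\subset\subset\Omega'''\subset\subset\Omega''\subset\subset\Omega$, each obtained by shrinking by a fixed fraction of $\mathrm{dist}(\Omega',\partial\Omega)$: the normalisation $B_{n^{-1}}(0)\subset\Omega\subset B_1(0)$ supplies the initial scale-free bounds on $w$ and $\mathrm{D} w$, and the nested shrinking absorbs the loss of domain at each application of an interior estimate without introducing any dependence other than on $n$ and $\mathrm{dist}(\Omega',\partial\Omega)$.
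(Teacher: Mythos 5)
The paper does not prove this lemma itself; it is quoted from Jian \& Wang \cite{XJW2}, and the underlying argument there (see also Guti\'errez \cite{CG1}) is exactly the Pogorelov--Evans--Krylov--Schauder bootstrap that you outline. Your sketch is correct: you correctly locate the role of the normalisation $B_{n^{-1}}(0)\subset\Omega\subset B_1(0)$ as the source of the initial universal $L^\infty$ and gradient bounds needed to start Pogorelov's estimate, and you correctly note that the remaining work is to track the nested subdomains so every constant depends only on $n$ and $\mathrm{dist}(\Omega',\partial\Omega)$.
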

For the final part of our estimates we also need the following result from \cite{CG1}:
\begin{lemma}\label{C3constant}
Let $\Omega$ be a convex domain such that
\begin{equation*}
    B_{R_1}(0) \subset \Omega \subset B_{R_2}(0)
\end{equation*}
with $n^{-1} \leq R_1 \leq R_2 \leq 1$. Suppose that $w$ is a smooth solution to \eqref{MAConst1} \& \eqref{MAConst2}.
Then, for any subdomain $\Omega' \subset \Omega$ there exists a positive constant $C^*$ depending only on dist$(\Omega', \partial \Omega)$, such that
\begin{equation*}
    |\mathrm{D}^2w - I| \leq C^*(R_2+R_1)(R_2-R_1)
\end{equation*}
and
\begin{equation*}
    ||\mathrm{D}^3w||_{L^\infty(\Omega')} \leq C^*(R_2+R_1)(R_2-R_1).
\end{equation*}
\end{lemma}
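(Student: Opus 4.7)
The plan is to sandwich $w$ between the two explicit radial solutions on $B_{R_1}(0)$ and $B_{R_2}(0)$, and then apply the lower-order comparison of Lemma \ref{Lowerorderestimates} to transfer this $C^0$ closeness to control on $\mathrm{D}^2w$ and $\mathrm{D}^3w$. The key observation is that the radial quadratic $q_R(x) := \tfrac{1}{2}(|x|^2 - R^2)$ solves $\det \mathrm{D}^2 q_R = 1$ in $B_R(0)$ with $q_R = 0$ on $\partial B_R(0)$, and satisfies $\mathrm{D}^2 q_R \equiv I$, $\mathrm{D}^k q_R \equiv 0$ for $k \geq 3$. So measuring $w$ against $q_{R_1}$ or $q_{R_2}$ is effectively measuring $\mathrm{D}^2 w$ against $I$ and $\mathrm{D}^3 w$ against $0$.

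The first step is a comparison principle argument. On $\partial \Omega$ we have $R_1 \leq |x| \leq R_2$, hence $q_{R_1}(x) \geq 0 = w(x) \geq q_{R_2}(x)$. Since $\det \mathrm{D}^2 q_{R_1} = \det \mathrm{D}^2 w = \det \mathrm{D}^2 q_{R_2} = 1$ in $\Omega$, the comparison principle yields
\begin{equation*}
    q_{R_2}(x) \leq w(x) \leq q_{R_1}(x) \qquad \text{for all } x \in \Omega,
\end{equation*}
and consequently
\begin{equation*}
    0 \leq q_{R_1}(x) - w(x) \leq q_{R_1}(x) - q_{R_2}(x) = \tfrac{1}{2}(R_2+R_1)(R_2-R_1).
\end{equation*}

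The second step is to promote this $L^\infty$ bound to derivative bounds by invoking Lemma \ref{Lowerorderestimates}. To do so we need uniform $C^4$ control of both functions on an intermediate domain. For $q_{R_1}$ this is immediate since $q_{R_1}$ is a quadratic polynomial with trivial $C^4$ norm. For $w$, the hypothesis $n^{-1} \leq R_1 \leq R_2 \leq 1$ gives $B_{n^{-1}}(0) \subset \Omega \subset B_1(0)$, so Lemma \ref{C4Bound} applies and yields $\|w\|_{C^4(\Omega'')} \leq C$ on any $\Omega'' \subset\subset \Omega$. Choose $\Omega''$ with $\Omega' \subset\subset \Omega'' \subset\subset \Omega$ and with $\mathrm{dist}(\Omega'', \partial \Omega)$ and $\mathrm{dist}(\Omega', \partial \Omega'')$ both comparable to $\mathrm{dist}(\Omega', \partial \Omega)$. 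Applying Lemma \ref{Lowerorderestimates} to $w$ and $q_{R_1}$ on $\Omega''$ with $\delta = \tfrac{1}{2}(R_2+R_1)(R_2-R_1)$ gives, for $i = 1,2,3$,
\begin{equation*}
    |\mathrm{D}^i(w - q_{R_1})| \leq C^*(R_2+R_1)(R_2-R_1) \qquad \text{in } \Omega'.
\end{equation*}
Since $\mathrm{D}^2 q_{R_1} = I$ and $\mathrm{D}^3 q_{R_1} = 0$, the $i=2$ and $i=3$ cases are exactly the two claimed estimates.

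I do not expect a real obstacle here: the only thing to be slightly careful about is that Lemma \ref{Lowerorderestimates} is phrased for $\Omega' \subset\subset \Omega$ with the constant depending on $\mathrm{dist}(\Omega', \partial \Omega)$, so one must pass through the auxiliary domain $\Omega''$ so that $w$ genuinely enjoys a $C^4$ bound where the comparison is taken. Once that scaffolding is in place the proof reduces to the comparison principle plus a direct application of the previous two lemmas.
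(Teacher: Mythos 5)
Your proof is correct. The paper does not actually supply a proof of this lemma --- it cites it to Gutierrez's book and remarks that the first inequality ``can be seen as a consequence of the proof of [Lemma 8.2.1, CG1]'' --- and the argument you give (comparison with the radial quadratics $q_{R_1}$, $q_{R_2}$, which traps $w$ in the band $0\le q_{R_1}-w\le\tfrac12(R_2+R_1)(R_2-R_1)$, followed by Lemma~\ref{Lowerorderestimates} on an intermediate domain $\Omega''$ using the $C^4$ bound of Lemma~\ref{C4Bound}) is precisely the standard barrier-comparison route underlying that reference, so no comparison of approaches is needed. The only cosmetic points worth noting: the lemma's hypothesis should read $\Omega'\subset\subset\Omega$ (as your argument requires and as the presence of $\mathrm{dist}(\Omega',\partial\Omega)$ already forces), and the constant $C^*$ also depends on $n$ through Lemmas~\ref{Lowerorderestimates} and~\ref{C4Bound}, which is implicit but should be stated.
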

We remark that the first such inequality can be seen as a consequence of the proof of \cite[Lemma 8.2.1]{CG1}.
\section{Sections of Solutions to Monge-Amp\`ere Equations}\label{sections}
Define a \textit{section} of $v$ as follows:
\begin{equation}\label{section}
    S_{h,v}(x_0) := \{ x \in \Omega: v(x) < v(x_0) + p\cdot(x-x_0) + h\},
\end{equation}
 for $h \in \mathbb{R}$ and $p \in \partial v(x_0)$, the sub-differential of $v$. Where $x_0$ is the minimum of $v$ we simply denote $S_{h,v}(x_0) := S_{h,v}$. For $v \in C^1(\Omega)$ we have that $\partial v(x_0) = \{\nabla v(x_0) \}$ and hence $p = \nabla v(x_0)$. 
We also have the following volume estimates for the section $S_{\hat{h},v}$:
\begin{lemma}\label{volumeestimate}
Let $\Omega$ be an open set, and $v$ a convex function satisfying \eqref{MA1} - \eqref{MA3} in $\Omega$. For $S_{\hat{h},v}(x_0) \subset \subset \Omega$, $\hat{h}>0$ there exist constants $0 < C_1\leq C_2$ such that
\begin{equation}\label{volest}
C_1\hat{h}^{n/2} \leq |S_{\hat{h},v}(x_0)| \leq C_2\hat{h}^{n/2}.
\end{equation}
\end{lemma}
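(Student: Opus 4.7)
The plan is to affinely normalize the section and then sandwich $\hat h$ between two multiples of $|S_{\hat h, v}(x_0)|^{2/n}$ using the Alexandrov Maximum Principle in one direction and the Comparison Principle against an explicit quadratic barrier in the other. I would begin by invoking John's Lemma on $S := S_{\hat h, v}(x_0)$ to obtain an affine map $T(x) = A(x - c)$ with $B_{n^{-1}}(0) \subset T(S) \subset B_1(0)$, so that $|T(S)| = |\det A|\cdot|S|$ is pinched between dimensional constants and hence $|\det A|^{-1}$ and $|S|$ are comparable up to factors depending only on $n$. Setting $\phi(x) := v(x) - v(x_0) - p\cdot(x-x_0) - \hat h$ and $\tilde\phi(y) := \phi(T^{-1}y)$, the function $\tilde\phi$ is convex on $T(S)$, vanishes on $\partial T(S)$, attains its minimum $-\hat h$ at the interior point $y_0 := T(x_0)$, and satisfies $(1-\varepsilon)|\det A|^{-2} \le \det\mathrm{D}^2\tilde\phi \le (1+\varepsilon)|\det A|^{-2}$.

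For the lower bound on $|S|$, I would apply the Alexandrov Maximum Principle to $\tilde\phi$ at $y_0$. Using $\mathrm{diam}(T(S))\le 2$, $\mathrm{dist}(y_0, \partial T(S))\le 1$, and the upper bound on $\det\mathrm{D}^2 \tilde\phi$, one obtains $\hat h^n \le C_n(1+\varepsilon)|\det A|^{-2}|T(S)|$; combining with $|T(S)|\le|B_1|$ and $|\det A|^{-1}\le c_n |S|$ (which follows from $|T(S)|\ge|B_{n^{-1}}|$) rearranges to $\hat h^n \le C|S|^2$, i.e.\ $|S|\ge C_1 \hat h^{n/2}$.

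For the upper bound I would exploit the lower Monge-Amp\`ere bound via the explicit paraboloid
\begin{equation*}
q(y) := \tfrac{1}{2}\bigl((1-\varepsilon)|\det A|^{-2}\bigr)^{1/n}\bigl(|y|^2 - n^{-2}\bigr).
\end{equation*}
This $q$ is convex, satisfies $\det\mathrm{D}^2 q = (1-\varepsilon)|\det A|^{-2}\le \det\mathrm{D}^2\tilde\phi$ on $T(S)$, and is nonnegative on $\partial T(S)$ because that boundary lies outside $B_{n^{-1}}(0)$. The Comparison Principle then forces $q\ge\tilde\phi$ throughout $T(S)$, so evaluating at $y=0\in T(S)$ gives $-\hat h = \min \tilde\phi \le q(0) = -\tfrac{1}{2}n^{-2}(1-\varepsilon)^{1/n}|\det A|^{-2/n}$, whence $|\det A|^{-1}\le C\hat h^{n/2}$ and finally $|S|\le|\det A|^{-1}|B_1|\le C_2\hat h^{n/2}$.

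The only real obstacle is the bookkeeping around the Jacobian $|\det A|$ of the normalizing map and its cancellation against the dimensional volume factors supplied by John's Lemma; both Monge-Amp\`ere principles otherwise apply essentially verbatim, and the asymmetry of the resulting constants $C_1, C_2$ simply reflects the two-sided bound $1-\varepsilon\le f\le 1+\varepsilon$.
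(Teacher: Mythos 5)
Your proof is correct. The paper states this lemma without proof as a classical fact (it is essentially Corollary 3.2.5 in Gutierrez's book, which the paper cites throughout), and your argument reproduces the standard literature proof: John's Lemma normalization, Alexandrov's Maximum Principle for the lower volume bound, and comparison against an explicit paraboloid barrier for the upper bound. One small phrasing issue: the inequality $-\hat h = \min\tilde\phi \le q(0)$ comes from $\min\tilde\phi \le \tilde\phi(0) \le q(0)$ (or equivalently from $\min\tilde\phi \le \min q = q(0)$, since $q$ is a paraboloid centred at the origin), not literally from ``evaluating at $y=0$''; the logic is sound, but it is worth saying explicitly that $0 \in B_{n^{-1}}(0) \subset T(S)$ and that the comparison principle gives pointwise domination before taking minima.
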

We now claim here that the minimal volume ellipsoids $E$ cannot be too eccentric for solutions to the Monge-Amp\`ere equation:
\begin{lemma}
Suppose that $v$ satisfies \eqref{MA1} - \eqref{MA3}. Let $S_{\hat{h},v}(x_0) \subset\subset \Omega$, for $\hat{h}>0$. Then, the minimum enclosing ellipsoid of $S_{\hat{h},v}(x_0)$ from John's Lemma with major axis length $R$ and minor axis length $r$ satisfies
\begin{equation*}
\frac{R}{r} \leq C\hat{h}^{-n/2}.
\end{equation*}
\end{lemma}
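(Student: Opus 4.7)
The plan is purely geometric: combine John's Lemma, the lower bound from the volume estimate of Lemma~\ref{volumeestimate}, and the a priori boundedness of $\Omega$. The PDE enters only through the $\hat{h}^{n/2}$ lower bound on $|S_{\hat{h},v}(x_0)|$.

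First, I would apply John's Lemma to $S := S_{\hat{h},v}(x_0)$ to produce the minimum enclosing ellipsoid $E$. Let its semi-axes be $a_1 \geq a_2 \geq \cdots \geq a_n$, so that $R = a_1$, $r = a_n$, and $n^{-1} E \subset S \subset E$. Since $S \subset \Omega$, the chain $n^{-1} E \subset S \subset \Omega$ yields
\begin{equation*}
\frac{2R}{n} = \operatorname{diam}(n^{-1} E) \leq \operatorname{diam}(S) \leq \operatorname{diam}(\Omega),
\end{equation*}
and hence the a priori upper bound $R \leq (n/2)\operatorname{diam}(\Omega)$, which depends only on $n$ and $\Omega$.

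Second, I would obtain a matching lower bound on $r$ from volumes. Using $a_i \leq R$ for all $i$ together with $a_n = r$, the ellipsoid volume satisfies
\begin{equation*}
|E| \;=\; \omega_n \prod_{i=1}^n a_i \;\leq\; \omega_n R^{n-1} r.
\end{equation*}
Combining this with $|E| \geq |S| \geq C_1 \hat{h}^{n/2}$ from Lemma~\ref{volumeestimate} gives $r \geq (C_1/\omega_n)\,\hat{h}^{n/2}/R^{n-1}$, and substituting the previous bound on $R$ produces $r \geq c\,\hat{h}^{n/2}$ for a constant $c = c(n,\Omega)$.

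Dividing the two bounds yields $R/r \leq C\hat{h}^{-n/2}$ with $C = C(n,\Omega)$, as claimed. There is no serious obstacle; the one subtlety is to pair the single factor of $r$ in the volume formula for $E$ with $R^{n-1}$ rather than the other way round, since this is precisely what makes the power of $\hat{h}$ come out to $-n/2$ after using $|S| \gtrsim \hat{h}^{n/2}$.
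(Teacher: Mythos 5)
Your proof is correct, but it takes a genuinely different route from the paper's. The paper bounds $r$ from below by invoking the $C^1$ (Lipschitz) regularity of $v$: it shows $v(x)-v(x_0)\leq C|x-x_0|$, deduces $B_{\hat{h}/C}(x_0)\subset S_{\hat{h},v}(x_0)$, and hence $r\gtrsim\hat{h}$; it then uses the volume \emph{upper} bound $|S|\leq C_2\hat{h}^{n/2}$ from Lemma~\ref{volumeestimate} together with $|E|\gtrsim Rr^{n-1}$ to conclude $R\lesssim\hat{h}^{1-n/2}$. You instead use the trivial a priori bound $R\lesssim\operatorname{diam}(\Omega)$, then feed the volume \emph{lower} bound $|S|\geq C_1\hat{h}^{n/2}$ into $|E|\leq\omega_nR^{n-1}r$ to get $r\gtrsim\hat{h}^{n/2}$. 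Both yield $R/r\leq C\hat{h}^{-n/2}$, but the intermediate bounds trade off differently: the paper gets a stronger (linear-in-$\hat{h}$) lower bound on $r$ at the cost of an $R$-bound that degrades as $\hat{h}\to0$, while yours keeps $R$ bounded a priori but gives the weaker $r\gtrsim\hat{h}^{n/2}$. Your argument has the advantage of being entirely convex-geometric and not requiring the $C^1$ regularity of $v$ (cited in the paper from \cite{LAAF2} plus a convexity argument); the PDE enters only through the volume lower bound, as you observe. One minor cosmetic point: the paper's phrasing ``major axis length $R$'' suggests the full axis rather than the semi-axis, so your identity $\operatorname{diam}(n^{-1}E)=2R/n$ should be $R/n$ under that convention; this only changes a harmless factor of~$2$.
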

\begin{proof} Upon subtraction of a linear function from $v$, we can assume $v(x)$ attains
minimum at $x_0$. Given the assumptions on $f$, we see that $v \in C^1(\Omega)$ (see \cite{LAAF2}, for instance). In particular, by a simple convexity argument (see, for instance, \cite[Lemma 3.2.1]{CG1}), for any $x \in S_{\hat{h},v}$, we have that
\begin{equation}\label{c1alphabound}
    v(x) - v(x_0) \leq C|x-x_0|,
\end{equation}
Then, since the maximum of $v$ in $S_{\hat{h},v}(x_0)$ is attained on the boundary, we have
\begin{equation}
B_{\hat{h}/C}(x_0) \subset S_{\hat{h},v}(x_0)
\end{equation}
Since $S_{\hat{h},v}(x_0) \subset E$, this means the minor axis length $r$ of $E$ is at least $2\hat{h}/C$. We also have that $vol(n^{-1}E) \leq C_2\hat{h}^{n/2}$ due to John's Lemma and Lemma \ref{volumeestimate}. Meanwhile, due to the lower bound on the minor axis length we have that $vol(E) \geq C'Rr^{n-1} \geq C{'}{'}R\hat{h}^{n-1}$ and therefore
\begin{equation*}
CR\hat{h}^{(n-1)} \leq vol(n^{-1}E)\leq C_2\hat{h}^{n/2} \end{equation*}
From this we have demonstrated that the ratio $R/r$ is uniformly bounded. 
\end{proof}
The following results are on normalised sections of solutions to Monge-Amp\`ere equations with right-hand side sufficiently close to 1 and the techniques are inspired by Caffarelli \cite{LC5} and Gutierrez \cite{CG1}. These results show us that normalised sections can be bounded between two balls that are sufficiently close together; so our solutions within these sections are sufficiently close to a quadratic polynomial. \\ \\
We have the following lemma inspired by Gutierrez \cite{CG1}, with a more streamlined proof. It also gives us a tighter gap between the inner and outer balls. Below, the scalars $\sigma,\sigma'$ are associated with such gaps.
\begin{lemma}\label{Stepk}
Suppose that a convex domain $\Omega \subset \mathbb{R}^n$ satisfies, for $0<\sigma \leq \frac{1-n^{-1}}{1+n^{-1}}$ 
$$B_{(1-\sigma)\sqrt{2}}(0) \subset \Omega \subset B_{(1+\sigma)\sqrt{2}}(0)$$
and suppose $v$ is a strictly convex function in $\Omega$ satisfying, with constant $\delta <1$,
\begin{align}
    1-\delta \leq \det \mathrm{D}^2v &\leq 1+\delta \text{    in    }\Omega \label{boundedMA} \\
    v&=0 \text{    in    }\partial \Omega.
\end{align}
Then there exist positive constants $\hat{c},\hat{c}_1$ that only depend on dimension $n$, such that for any $\mu>0$ satisfying $3\hat{c}(\sigma\mu + \delta^{1/2}) \leq \mu^{1/2}\leq\hat{c}_1$ we have
\begin{equation}\label{ballestimate}
    B_{(1 - \sigma')\sqrt{2}}(0) \subset \mu^{-1/2}\widetilde{T}S_{\mu,v} \subset B_{(1 + \sigma')\sqrt{2}}(0),
\end{equation}
where $\sigma' = \hat{c}\mu^{-1/2}(\sigma\mu + \delta^{1/2})$ satisfying $\sigma'\leq 1/3$, $\widetilde{T}x = \widetilde{A}(x - x_0)$ with $x_0$ being the point at which $v$ attains its minimum and  the positive definite matrix $\widetilde{A}$ satisfies
\begin{equation*}
    \det \widetilde{A} = 1 \quad \text{and for any} \quad\xi \in \mathbb{R}^n, \quad (1-c_4\sigma)|\xi|^2 \leq |\widetilde{A}\xi|^2 \leq (1+c_5\sigma)|\xi|^2 \quad\text{with}\quad 1-c_4\sigma>c_3,
\end{equation*}
where positive constants $c_3,c_4,c_5$ only depend on dimension $n$.
\end{lemma}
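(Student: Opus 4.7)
The plan is to compare $v$ with the solution of the constant right-hand side Monge-Amp\`ere equation on $\Omega$ and then exploit that this auxiliary solution is $C^3$-close to a paraboloid in the interior. Let $w$ be the convex solution of $\det \mathrm{D}^2 w = 1$ in $\Omega$ with $w = 0$ on $\partial\Omega$. The convex functions $(1\pm\delta)^{1/n}w$ solve $\det \mathrm{D}^2\cdot = 1\pm\delta$ with the same boundary data, so the comparison principle sandwiches $v$ between them, and since $\Omega \subset B_{(1+\sigma)\sqrt{2}}(0)$ gives $\|w\|_{L^\infty(\Omega)} \leq C$, one obtains $\|v - w\|_{L^\infty(\Omega)} \leq C\delta$. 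After an isotropic rescaling bringing the outer ball radius into the range required by Lemma \ref{C3constant}, the latter yields, on any fixed interior subdomain, $|\mathrm{D}^2 w - I| \leq C\sigma$ and $\|\mathrm{D}^3 w\|_{L^\infty} \leq C\sigma$. Writing $x_w^*$ for the (unique interior) minimum of $w$, $A := \mathrm{D}^2 w(x_w^*)$ and $\widetilde{A} := A^{1/2}$, the identity $\det A = 1$ gives $\det\widetilde{A} = 1$, while $|A - I| \leq C\sigma$ gives the eigenvalue bounds $(1 - c_4\sigma)|\xi|^2 \leq |\widetilde{A}\xi|^2 \leq (1 + c_5\sigma)|\xi|^2$ asked of $\widetilde{A}$.

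Next I would estimate the shift $|x_0 - x_w^*|$ of the minimum of $v$ from that of $w$. Since $x_0$ minimises $v$ and $x_w^*$ minimises $w$ while $|v - w| \leq C\delta$,
\begin{equation*}
    w(x_0) - w(x_w^*) \leq \bigl(v(x_0) + C\delta\bigr) - \bigl(v(x_w^*) - C\delta\bigr) \leq 2C\delta,
\end{equation*}
and the uniform convexity $\mathrm{D}^2 w \succeq (1 - C\sigma)I$ near $x_w^*$ upgrades this to $|x_0 - x_w^*| \leq C\delta^{1/2}$. This $\delta^{1/2}$ is precisely the origin of the matching term in the formula for $\sigma'$.

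Turning to the section itself, $|v - w| \leq C\delta$ together with $|v(x_0) - w(x_w^*)| \leq C\delta$ (a consequence of the same inequality and the two minimum properties) gives
\begin{equation*}
    \{w < w(x_w^*) + \mu - 2C\delta\} \subset S_{\mu,v}(x_0) \subset \{w < w(x_w^*) + \mu + 2C\delta\}.
\end{equation*}
These sets have diameter $\lesssim \sqrt{\mu}$, so Taylor expanding $w$ at $x_w^*$ with cubic remainder $\leq C\sigma|x - x_w^*|^3 \leq C\sigma\mu^{3/2}$ sandwiches them between ellipsoids $\{(x - x_w^*)^\top A(x - x_w^*) < 2\mu \pm C(\delta + \sigma\mu^{3/2})\}$. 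Under $y = \widetilde{A}(x - x_0)$, the identity $\widetilde{A}^{-\top}A\widetilde{A}^{-1} = I$ reduces the quadratic form to $(x - x_w^*)^\top A(x - x_w^*) = |y + \widetilde{A}(x_0 - x_w^*)|^2$, so these ellipsoids become Euclidean balls of radius $\sqrt{2\mu \pm C(\delta + \sigma\mu^{3/2})}$ whose centres lie within $C\delta^{1/2}$ of the origin. Dilating by $\mu^{-1/2}$ and absorbing the centre shift into the radii produces
\begin{equation*}
    B_{(1-\sigma')\sqrt{2}}(0) \subset \mu^{-1/2}\widetilde{T}S_{\mu,v}(x_0) \subset B_{(1+\sigma')\sqrt{2}}(0) \quad \text{with} \quad \sigma' \leq C\bigl(\sigma\mu^{1/2} + \delta/\mu + \mu^{-1/2}\delta^{1/2}\bigr),
\end{equation*}
and the hypothesis $3\hat{c}(\sigma\mu + \delta^{1/2}) \leq \mu^{1/2}$ forces $\delta \leq C\mu$, so $\delta/\mu \leq C\mu^{-1/2}\delta^{1/2}$, collapsing the bound to the advertised $\sigma' = \hat{c}\mu^{-1/2}(\sigma\mu + \delta^{1/2}) \leq 1/3$.

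The main obstacle I expect is the careful bookkeeping of several small parameters and verifying that the section together with both minima sit comfortably inside the interior subdomain where Lemma \ref{C3constant} supplies $C^3$-closeness of $w$ to its quadratic Taylor polynomial at $x_w^*$; choosing $\hat{c}_1$ small enough makes this automatic. The delicate interplay between the Taylor remainder $\sigma\mu^{3/2}$, the $L^\infty$ comparison error $\delta$, and the minimum-shift error $\delta^{1/2}$ is exactly what produces the announced form of $\sigma'$ and is the part that demands the most care.
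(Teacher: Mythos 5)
Your proposal is correct and follows essentially the same approach as the paper: compare $v$ with the constant right-hand-side solution $w$ via the comparison principle to get $\|v-w\|_{L^\infty}\leq C\delta$, invoke the $C^3$ estimate for the constant right-hand-side problem to bound $|\mathrm{D}^2w-I|$ and $|\mathrm{D}^3w|$ by $C\sigma$, derive the minimum-shift estimate $|x_0-x_w^*|\leq C\delta^{1/2}$, and take $\widetilde{A}$ to be the square root of a Hessian of $w$. The one bookkeeping difference is that you Taylor-expand $w$ at its own minimum $x_w^*$ and propagate a centre shift of order $\delta^{1/2}$, whereas the paper expands at $x_0$ and propagates the nonzero gradient term $|\mathrm{D}w_0(x_0)|\leq C\delta^{1/2}$, but both yield the same $\mu^{-1/2}\delta^{1/2}$ contribution to $\sigma'$.
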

\begin{proof}[Proof of Lemma \ref{Stepk}]
For all $0<\sigma\leq \frac{1-n^{-1}}{1+n^{-1}}$, the assumption on $\Omega$ implies $\frac{2\sqrt{2}}{1+n^{-1}}B_{n^{-1}}(0) \subset \Omega \subset \frac{2\sqrt{2}}{1+n^{-1}}B_1(0)$ and by the remarks given after John's Lemma, any existing results on normalised domains can be applied to $\Omega$ up to an $O(1)$ constant multiplier. Then, in this proof, all versions of the capital letter $C$ denote universal constants that only depend on dimension $n$, which eventually implies that the carefully chosen constants $\hat{c}, \hat{c}_1, c_3,c_4,c_5$ also have such dependence. \\ \\
Let $w_0$ be the smooth convex solution to the equation
\begin{align}
\det \mathrm{D}^2w_0 &= 1 \text{   in   }\Omega \label{W0}\\
w_0 &=0 \text{   on   }\partial\Omega. \label{W0Bdry}
\end{align}
Then, from the comparison principle we have that
\begin{equation}\label{comp1}
|v(x) - w_0(x)| \leq \overline{C}\delta 
\end{equation}
for $x \in \Omega$. Let $x_1$ be the point at which $w_0$ attains its minimum. Then
\begin{equation*}
v(x_0) - w_0(x_1) \leq v(x_1) - w_0(x_1) \leq \overline{C} \delta,
\end{equation*}
as $x_0$ is the minimum point of $v$. Moreover
\begin{equation*}
v(x_0) - w_0(x_1) \geq v(x_0) - w_0(x_0) \geq - \overline{C}\delta,
\end{equation*}
as $x_1$ is the point at which $w_0$ attains its minimum. Hence we have that
$|v(x_0) - w_0(x_1)| \leq \overline{C}\delta$ and, in view of \eqref{comp1}
\begin{equation}\label{vvsw}
|w_0(x_0) - w_0(x_1)| \leq 2\overline{C}\delta.
\end{equation}
Due to \cite[Proposition 3.2.4]{CG1}, we have that $|v(x_0)| \approx \overline{C}_1$ and $|w_0(x_1)| \approx \overline{C}_2$, and by Alexandrov's Maximum Principle, $dist(x_i,\partial\Omega)\geq C_0$. Meanwhile Pogorelov's estimates (see, for instance \cite{CG1}) imply that
\begin{equation}\label{Pog}
    C_1I \leq \mathrm{D}^2w_0(x) \leq C_2I
\end{equation}
for any $x \in \Omega_0 := \{ x \in \Omega:dist(x,\partial \Omega )\geq C_0/2\}$. \\
Next, perform the following Taylor expansion
\begin{equation}\label{Tay1}
w_0(x_0) - w_0(x_1) = \frac{1}{2}(x_0-x_1)^T\cdot\mathrm{D}^2w_0\rvert_{x=x_1 + \theta(x_0-x_1)}\cdot(x_0-x_1) \quad \text{for some} \quad \theta \in (0,1).
\end{equation}
 Moreover, as $x_0,x_1 \in \Omega_0$, using \eqref{vvsw}, \eqref{Pog} \& \eqref{Tay1} we have that 
 \begin{equation}\label{x0x1}
|x_0-x_1| \leq C_3\delta^{1/2}.
 \end{equation} Then, for $i = 1,2,\ldots ,n$ we have
\begin{equation*}
\mathrm{D}_i w_0(x_0) - \mathrm{D}_iw_0(x_1) =  \mathrm{D}\mathrm{D}_iw_0\rvert_{x_1+\theta(x_0-x_1)}\cdot(x_0 - x_1), \quad \theta \in (0,1),
\end{equation*}
and we use \eqref{Pog} and \eqref{x0x1} alongside the fact that $x_1$ is the minimum point of $w_0$ to obtain
\begin{equation}\label{gradest}
|\mathrm{D}w_0(x_0)| \leq C_4\delta^{1/2}.
\end{equation}
We now diverge from \cite{CG1} and instead prove directly that
\begin{equation}\label{boundarycontainment}
(\mu^{1/2} - \hat{c}\tau)E \subset \partial S_{\mu,v} \subset (\mu^{1/2} + \hat{c}\tau)E \subset \Omega_0
\end{equation}
for $\tau = \sigma\mu + \delta^{1/2}$ and
\begin{equation*}
 E = \{y: \frac{1}{2}\langle\mathrm{D}^2w_0(x_0)(y-x_0),(y-x_0)\rangle \leq 1\}
\end{equation*} 
The dilation of $E$ is with respect to $x_0$. By \eqref{Pog}, the fact that 
\begin{equation}\label{ellipse1}
\frac{1}{2}\langle\mathrm{D}^2w_0(x_0)(x-x_0),(x-x_0)\rangle = (\mu^{1/2} + \hat{c}\tau)^2
\end{equation}
and the assumption that $3\hat{c}\tau \leq \mu^{1/2}$ we can deduce that 
\begin{equation}\label{xx0}
|x-x_0| \leq C'\mu^{1/2} \quad \text{for} \quad x \in \partial (\mu^{1/2} + \hat{c}\tau)E.
\end{equation} 
Therefore, by the fact that $dist(x_i
, \partial \Omega) \geq C_0$ and the definition of $\Omega_0$, and recalling the assumption $\mu \leq \hat{c}_1$,the last inclusion of \eqref{boundarycontainment} is valid if $C'\hat{c}_1=\frac{C_0}{2}$. \\ \\
Moreover, the assumption that $\Omega$ is tightly constrained between $B_{(1-\sigma_0)\sqrt{2}}$ and $B_{(1+\sigma)\sqrt{2}}$ means there exists a constant $\widetilde{C}$ so that $\widetilde{C}\sigma$ is the upper bound on $|\mathrm{D}^3w_0|$ obtained from Lemma \ref{C3constant} over the domain $\Omega_0$. By Taylor expansion, we then have from \eqref{ellipse1}, \eqref{xx0} that
\begin{align*}
w_0(x) - w_0(x_0) - \mathrm{D}w_0(x_0)\cdot(x-x_0) &\geq (\mu^{1/2} + \hat{c}\tau)^2 - \widetilde{C}\sigma|x-x_0|^3 \nonumber\\
&\geq (\mu  + 2\hat{c}\mu^{1/2}\tau + \hat{c}^2\tau^2) - C{'}{'}\sigma\mu^{3/2} \nonumber \\
& >(\mu  + 2\hat{c}\mu^{1/2}\tau - \hat{c}^2\tau^2) - C{'}{'}\sigma \mu^{3/2}.
\end{align*}
Here we flipped the sign of $\hat{c}^2\tau^2$ so that the strategy used in the next step can be re-used again later. Now establish a lower bound on $v(x) - v(x_0)$ using \eqref{comp1}, \eqref{gradest} and \eqref{xx0} and the above result:
\begin{align*}
v(x)-v(x_0) &= \left[v(x) - w_0(x)\right] - \left[v(x_0) - w_0(x_0)\right] + \left[w_0(x) - w_0(x_0) - \mathrm{D}w_0(x_0)\cdot(x-x_0)\right] + \mathrm{D}w_0(x_0)\cdot(x-x_0) \\
&> -2\overline{C}\delta + \mu  + 2\hat{c}\mu^{1/2}\tau - \hat{c}^2\tau^2 - C{'}{'}\sigma \mu^{3/2} - C_4C'\delta^{1/2}\mu^{1/2} \\
&= \mu + \left(\frac{2}{3}\hat{c}\mu^{1/2}\tau - 2\overline{C}\delta\right) +\hat{c}\tau\left(\frac{1}{3}\mu^{1/2} -\hat{c}\tau\right) +\mu^{1/2}\left(\hat{c}\tau - C{'}{'}\sigma\mu-C_4C'\delta^{1/2}\right) \\
\end{align*}
Since $\tau = \sigma \mu + \delta^{1/2}$ with assumption $3\hat{c}\tau \leq \mu^{1/2}$ the second bracket must be non-negative, and we have the third bracket to be nonnegative provided that $\hat{c} \geq \max\{C{'}{'},C_4C'\}$. Further, since the first bracket is greater than $\frac{2}{3}\hat{c}\mu^{1/2}\delta^{1/2}-2\overline{C}\delta^{1/2}\tau$. in view of the assumption $3\hat{c}\tau \leq \mu^{1/2}$ again, we can have it to be positive provided that $\hat{c}^2 \geq \overline{C}$. Then $v(x) - v(x_0) > \mu$. Hence $x \notin S_{\mu,v}$. We now wish to prove the opposite inclusion. Let $x \in \partial(\mu^{1/2} - \hat{c}\tau)E$. In a similar spirit to before, \eqref{ellipse1} and \eqref{xx0} still hold but with $\mu^{1/2}+\hat{c}\tau$ being replaced by $\mu^{1/2}-\hat{c}\tau$. Therefore
\begin{equation*}
w_0(x) - w_0(x_0) - \mathrm{D}w_0(x_0)\cdot(x-x_0) \leq \mu - 2\hat{c}\mu^{1/2}\tau + \hat{c}^2\tau^2 + C{'}{'}\sigma\mu^{3/2},
\end{equation*}
Once again, we estimate $v(x) - v(x_0)$:
\begin{align}
v(x)-v(x_0) &\leq 2\overline{C}\delta + \mu - 2\hat{c}\mu^{1/2}\tau +\hat{c}^2\tau^2 + C{'}{'}\mu^{3/2} + C_4C'\delta^{1/2}\mu^{1/2} \label{B1}\\
&= \mu - \left(\frac{2}{3}\hat{c}\mu^{1/2}\tau 2\overline{C}\delta\right) -\hat{c}\tau\left(\frac{1}{3}\mu^{1/2}-\hat{c}\tau\right) - \mu^{1/2}\left(\hat{c}\tau - C{'}{'}\sigma\mu - C_4C'\delta^{1/2}\right), \label{B2}
\end{align}
and exactly the same argument shows that $v(x)-v(x_0)<\mu$, and hence $(\mu^{1/2} - \hat{c}\tau)E \subset S_{\mu,v}$, giving us \eqref{boundarycontainment}. \\
For the final part of the proof, recall the definition of set $E$ to have that with $\widetilde{A}=\sqrt{\mathrm{D}^2w_0(x_0)},$
\begin{equation*}
\partial\mu^{1/2}E = x_0 + \widetilde{A}^{-1}\left(\partial B_{\sqrt{2\mu}}(0)\right) \quad \text{and} \quad \det \widetilde{A}=1,
\end{equation*}
 where dilation of set $E$ is with respect to $x_0$. Set the affine transformation $\widetilde{T}x= \widetilde{A}(x-x_0)$ and we have that
\begin{equation*}
\widetilde{T}\left((1 \pm \hat{c}\mu^{-1/2}\tau)\mu^{1/2}E\right) = B_{(1 \pm \hat{c}\mu^{-1/2}\tau)\sqrt{2\mu}}(0),
\end{equation*}
with $\tau = \sigma\mu + \delta^{1/2}$. Then, from \eqref{boundarycontainment} we have proven \eqref{ballestimate}. The estimate $\sigma'\leq 1/3$ is simply due to the assumption $3\hat{c}(\sigma\mu +\delta^{1/2})\leq \mu^{1/2}$.
\end{proof}
The bound on $\widetilde{A}$ is due to Pogorelov's Estimate.  Now, for some constant $\mu<1$, $k=1,2,\ldots$ consider the sequence of level sets $S_{\mu^k,v}$. Clearly $0 \in S_{\mu^k,v}$ for every $k \in \mathbb{N}_0$, and the set $S_{\mu^k,v} \rightarrow \{0\}$ as $k \rightarrow \infty$. We make the following claim.
\begin{lemma}\label{bigresult1}
Suppose the convex domain $\Omega \subset \mathbb{R}^n$ containing the origin satisfies $B_{(1-\sigma_0)\sqrt{2}}(\xi) \subset \Omega \subset B_{(1+\sigma_0)\sqrt{2}}(\xi)$ for $\sigma_0 = \frac{1-n^{-1}}{1+n^{-1}}$, and some $\xi \in \mathbb{R}^n$. Let $v \in C^2(\Omega)\cap C(\overline{\Omega})$ be the strictly convex solution of
\begin{align}
\det \mathrm{D}^2v&=f \text{  in   }\Omega, \\
    v&= 0 \text{  on   }\partial \Omega,
\end{align}
and suppose that $v$ attains its minimum at the origin. Let $\hat{c},\hat{c}_1$ denote the same constants as in Lemma \ref{Stepk}. Fix a positive constant $\mu<\min\{1,\hat{c}_1^2,1/(3\hat{c})^2\}$. Finally, suppose $f(0)=1$. Then there exists a constant $\hat{c}_2$ depending only on $n$ so that, if
\begin{equation}\label{delta0}
\delta_0 :=\sup_{x,y\in\Omega}\left\{|f(x)-f(y)| \right\} <\min \left\{  1,\mu\left(\frac{1}{3\hat{c}}-\frac{1-n^{-1}}{1+n^{-1}}\mu^{1/2}\right)^2, \left(\frac{\mu^{1/2}(1-\hat{c}\mu^{1/2})\ln\mu^{-1/2}}{\hat{c}_2}\right)^2  \right\},
\end{equation}
then, for
\begin{equation}\label{deltak}
    \delta_k := \sup_{x,y, \in S_{\mu^k,v}}|f(x) - f(y)|, \qquad k=1,2,\ldots
\end{equation}
and recursively defined $\sigma_k$,
\begin{equation}\label{sigmak}
\sigma_k:=\hat{c}\mu^{-1/2}(\sigma_{k-1}\mu+\delta_{k-1}^{1/2}),\qquad k=1,2,\ldots
\end{equation}
there exist a sequence of $n$-by-$n$, positive definite matrices $\{A_k\}$ satisfying, for each $k=1,2,\ldots$
\begin{align}
\det A_k = 1, \quad \prod_{i=0}^{k-1}(1-c_4\sigma_i)|x|^2 \leq |A_kx|^2 &\leq \prod_{i=0}^{k-1}(1+c_5\sigma_i)|x|^2 \quad \text{with}\quad 1-c_4\sigma_{k-1}\geq c_3, \label{Ak1} \\
B_{(1-\sigma_k)\sqrt{2}}(0) \subset \mu^{-k/2} A_kS_{\mu^k,v} &\subset B_{(1+\sigma_k)\sqrt{2}}(0), \label{Ak2}
\end{align}
for the same constants $c_3,c_4,c_5$ (that only depend on $n$) as in Lemma \ref{Stepk}. Moreover,
\begin{equation}\label{deltaomega}
    \delta_k \leq \omega_f(K\mu^{k/2}), \quad \text{and}\quad |A_kx|^2 \leq K|x|^2.
\end{equation}
for some constant $K =K\left(n,\mu,||f||_{\mathcal{C}^{1/2}}^{1/2}\right)$ independent of $k$, given by \eqref{finalconstant}.
\end{lemma}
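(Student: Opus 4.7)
My plan is to prove Lemma \ref{bigresult1} by induction on $k$, using Lemma \ref{Stepk} at each step after renormalising the previously transformed section. The base case ($k=1$) is immediate: since $f(0)=1$, the global oscillation bound gives $1-\delta_0 \leq \det\mathrm{D}^2 v \leq 1+\delta_0$ on $\Omega$, which itself sits between balls of radii $(1\mp\sigma_0)\sqrt{2}$. So one applies Lemma \ref{Stepk} with $\sigma=\sigma_0$, $\delta=\delta_0$, and takes $A_1$ to be the $\widetilde{A}$ produced there. All of \eqref{Ak1} and \eqref{Ak2} then follow at $k=1$.

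For the inductive step, suppose $A_k$ is constructed with the stated properties. Define the affine map $T_k x := \mu^{-k/2} A_k x$ and the rescaled function $\widetilde{v}_k(y) := \mu^{-k} v(T_k^{-1} y)$ on $T_k S_{\mu^k,v}$. A direct change of variables (using $\det A_k = 1$) gives $\det \mathrm{D}^2 \widetilde{v}_k(y) = f(T_k^{-1}y)$, and since $T_k^{-1}(0)=0$ with $f(0)=1$, we obtain $1-\delta_k \leq \det \mathrm{D}^2 \widetilde{v}_k \leq 1+\delta_k$. One checks that $T_k(S_{\mu^{k+1},v}) = S_{\mu,\widetilde{v}_k}$ and that $\widetilde{v}_k$ attains its minimum at $0$. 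Provided the smallness hypotheses of Lemma \ref{Stepk} hold at this scale (namely $\sigma_k \leq \frac{1-n^{-1}}{1+n^{-1}}$ and $3\hat{c}(\sigma_k\mu+\delta_k^{1/2}) \leq \mu^{1/2} \leq \hat{c}_1$), Lemma \ref{Stepk} supplies a matrix $\widetilde{A}_k$ with $\det\widetilde{A}_k = 1$, eigenvalue control $(1-c_4\sigma_k)|\xi|^2 \leq |\widetilde{A}_k\xi|^2 \leq (1+c_5\sigma_k)|\xi|^2$, and $B_{(1-\sigma_{k+1})\sqrt{2}}(0) \subset \mu^{-1/2}\widetilde{A}_k S_{\mu,\widetilde{v}_k} \subset B_{(1+\sigma_{k+1})\sqrt{2}}(0)$. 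Setting $A_{k+1} := \widetilde{A}_k A_k$ and chasing the definitions yields $\mu^{-(k+1)/2} A_{k+1} S_{\mu^{k+1},v} = \mu^{-1/2}\widetilde{A}_k S_{\mu,\widetilde{v}_k}$, which gives \eqref{Ak2} at level $k+1$. The bounds \eqref{Ak1} propagate multiplicatively, since the product of the per-step eigenvalue bounds telescopes exactly into the stated product.

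The main obstacle — and the main content of the lemma beyond the induction — is closing the loop that certifies the iteration never escapes the hypotheses of Lemma \ref{Stepk}, and in the process producing the explicit $K$. Writing $\rho := \hat{c}\mu^{1/2} < 1$ (guaranteed by $\mu < (3\hat{c})^{-2}$), the recursion \eqref{sigmak} reads $\sigma_{k+1} = \rho\sigma_k + \rho\mu^{-1}\delta_k^{1/2}$, so $\sigma_k$ decays geometrically toward a forcing term controlled by $\sum \delta_i^{1/2}$. Meanwhile the containment \eqref{Ak2} together with the lower eigenvalue bound in \eqref{Ak1} gives $S_{\mu^k,v} \subset A_k^{-1} B_{(1+\sigma_k)\sqrt{2}\mu^{k/2}}(0)$, whose diameter is $\leq C\mu^{k/2}\prod_{i=0}^{k-1}(1-c_4\sigma_i)^{-1/2}$. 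If we denote this product (or rather a suitable constant multiple of it) by $K$, then $\delta_k \leq \omega_f(K\mu^{k/2})$ follows. The product in turn satisfies $\prod_{i=0}^{k-1}(1\pm c_{4,5}\sigma_i)^{\mp 1/2} \leq \exp\!\left(C\sum_i \sigma_i\right)$ (using $1-c_4\sigma_i \geq c_3$), so the second claim in \eqref{deltaomega} also reduces to bounding $\sum_i \sigma_i$.

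The circularity — $K$ depending on $\sum \sigma_i$, $\sum \sigma_i$ depending on $\sum \delta_i^{1/2}$, $\delta_i$ depending on $K$ — is closed by comparing $\sum_{i\geq 1} \omega_f(K\mu^{i/2})^{1/2}$ to the integral defining $\|f\|_{\mathcal{C}^{1/2}}$. Since $\omega_f$ is nondecreasing, for each $i$ one has $\int_{K\mu^{(i+1)/2}}^{K\mu^{i/2}} \frac{\omega_f(r)^{1/2}}{r}\,\mathrm{d}r \geq \tfrac{1}{2}\omega_f(K\mu^{(i+1)/2})^{1/2}\ln\mu^{-1}$, so telescoping gives $\sum \delta_i^{1/2} \leq \frac{2}{\ln\mu^{-1}}\int_0^K \frac{\omega_f(r)^{1/2}}{r}\mathrm{d}r \leq \frac{2}{\ln\mu^{-1}}\bigl(\|f\|_{\mathcal{C}^{1/2}}^{1/2} + \omega_f(\mathrm{diam}\,\Omega)^{1/2}\ln K\bigr)$. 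Combining with $\sum \sigma_i \leq \frac{\sigma_0}{1-\rho} + \frac{\rho\mu^{-1}}{1-\rho}\sum \delta_i^{1/2}$ and $\ln K \leq C' + C''\sum\sigma_i$ yields a linear inequality in $\ln K$; since $\omega_f(\mathrm{diam}\,\Omega) \leq 2\varepsilon$ can be made small, the coefficient of $\ln K$ on the right is strictly less than $1$, and solving gives $\ln K \leq C_1' + C_2' \|f\|_{\mathcal{C}^{1/2}}^{1/2}$, which is exactly the form \eqref{mainconstant}. The smallness conditions in \eqref{delta0} are then exactly what is needed to verify, at every step of the induction, that the hypotheses $\sigma_k \leq \sigma_0$ and $3\hat{c}(\sigma_k\mu + \delta_k^{1/2}) \leq \mu^{1/2}$ of Lemma \ref{Stepk} are preserved — a verification I would carry out at the end by showing that the a priori bound on $\sum \sigma_i$ keeps each $\sigma_k$ below the permitted thresholds.
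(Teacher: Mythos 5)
Your proposal is correct and follows essentially the same route as the paper's proof: an induction applying Lemma \ref{Stepk} to the rescaled $\widetilde{v}_k$ with $A_{k+1}=\widetilde{A}_{k+1}A_k$, deducing the containment $S_{\mu^k,v}\subset B_{C_k\mu^{k/2}}(0)$ from the accumulated eigenvalue bounds, telescoping $\sum\delta_i^{1/2}$ against the $\mathcal{C}^{1/2}$ integral, and closing the apparent circularity by a linear self-improving inequality in $\ln K$ whose coefficient the third term of \eqref{delta0} forces below $1$. The only cosmetic differences are that the paper verifies up front (by a separate induction using $\delta_k\leq\delta_0$ and the second term of \eqref{delta0}) that the hypotheses of Lemma \ref{Stepk} persist at every scale, and that it tracks the $k$-dependent bound $C_k$ explicitly before extracting the uniform $K$, whereas you defer these bookkeeping checks to the end.
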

The plan for the proof is to scale our solution $v$ such that we may apply Lemma \ref{Stepk} and then transform back, ensuring that reverting to the initial state doesn't become uncontrollable. This will roughly equate to ensuring that we have appropriate control of the parameter $\sigma$ in Lemma \ref{Stepk}. We then perform an inductive step to show that, as these sections decrease in size, we may continue to apply Lemma \ref{Stepk} on a ``re-scaled" section and then map back and look at the size of the accumulation of these normalising transformations. 
\begin{proof}[Proof of Lemma \ref{bigresult1}] First of all, we have estimate for every $k=1,2,\ldots$,
\begin{align}
3\hat{c}(\sigma_{k-1}\mu+\delta_{k-1}^{1/2}) &\leq \mu^{1/2} \leq \hat{c}_1, \label{estimate1} \\
\sigma_k &\leq \frac{1}{3} \leq \frac{1-n^{-1}}{1+n^{-1}}, \label{estimate2}
\end{align}
due to the assumptions on $\sigma_0,\mu,\delta_0$, the fact that $\delta_0\geq \delta_k$ and a straightforward induction. \\ \\
Let $\mathcal{P}_k$ for $k \in \mathbb{N}^+$ be the statement that \eqref{Ak1} - \eqref{Ak2} holds, and prove $\mathcal{P}_k$ by induction. First, when $k=1$ by \eqref{estimate1} and the definition of $\sigma_0$ we can apply Lemma \ref{Stepk}, noting that the minimum of $v$ is attained at the origin so that $\widetilde{T}x=\widetilde{A}x$, and prove $\mathcal{P}_1$ follows by defining $A_1=\widetilde{A}_1$. \\ \\
Next, suppose that $\mathcal{P}_k$ holds for some $k \in \mathbb{N}^+$. Define
\begin{equation}\label{vk}
    \widetilde{v}_k(x) := \mu^{-k}\left(v\left(\mu^{k/2} A_k^{-1}x\right)- \mu^k\right),
\end{equation}
and
\begin{equation*}
    \widetilde{\Omega}_k = \mu^{-k/2}A_k S_{\mu^k,v},
\end{equation*}
so that $\widetilde{v}_k = 0$ on $\partial \widetilde{\Omega}_k$.Since $\det A_k=1$, we have that $\det \mathrm{D}^2 \widetilde{v}_k(x) = f(\mu^{k/2} A_k^{-1}x) = \widetilde{f}_k(x)$ in $\widetilde{\Omega}_k$. We also have, from \eqref{Ak2} of $\mathcal{P}_k$,
\begin{equation*}
    B_{(1-\sigma_k)\sqrt{2}}(0) \subset \widetilde{\Omega}_k \subset B_{(1+\sigma_k)\sqrt{2}}(0),
\end{equation*}
for $\sigma_k$ given by \eqref{sigmak} and since $f(0)=1$, for $x \in \widetilde{\Omega}_k$,
\begin{equation*}
    |\widetilde{f}_k(x) - 1| = |f(\mu^{k/2} A_k^{-1}x) - 1| \leq \sup_{x,y,\in S_{\mu^k,v}}|f(x) - f(y)| = \delta_k,
\end{equation*}
We then apply Lemma \ref{Stepk}, noting the minimum of $\widetilde{v}_k$ is attained at the origin to show that there exists $\widetilde{A}_{k+1}$ such that
\begin{equation*}
\det \widetilde{A}_{k+1} = 1\quad \text{and}\qquad (1-c_4\sigma_k)|x|^2 \leq |\widetilde{A}_{k+1}x|^2 \leq (1+c_5\sigma_k)|x|^2, \quad 1-c_4\sigma_k\geq c_3
\end{equation*}
and 
\begin{equation*}
    B_{(1-\sigma_{k+1})\sqrt{2}}(0) \subset \mu^{-1/2}\widetilde{A}_{k+1} S_{\mu,\widetilde{v}_k} \subset B_{(1+ \sigma_{k+1})\sqrt{2}}(0),
\end{equation*}
where $\sigma_{k+1} = \hat{c}\mu^{-1/2}(\sigma_k\mu + \delta_k^{1/2})$. Define $A_{k+1} := \widetilde{A}_{k+1}A_k$. Scaling back using the definition of $\widetilde{\Omega}_k$ we prove $\mathcal{P}_{k+1}$ holds. \\ \\
Next, we wish to prove \eqref{deltaomega} and for that we need to show that $S_{\mu^k,v} \subset B_{K\mu^{k/2}}$ for some constant(s) $K$ under control. First, by \eqref{Ak2} we have that $S_{\mu^k,v} \subset A_k^{-1}B_{((1+ \sigma_k)\sqrt{2})\mu^{k/2}}(0)$. Subsequently, using \eqref{Ak1} we can deduce that $S_{\mu^k, v} \subset B_{\widetilde{C}_k\mu^{k/2}}(0)$, with
\begin{equation}\label{ckexp}
\widetilde{C}_k = \frac{(1+\sigma_k)\sqrt{2}}{\sqrt{\prod_{i=0}^{k-1}(1-c_4\sigma_i)}}.
\end{equation}
We have that 
\begin{equation}\label{nonuniformdelta}
\delta_k \leq \omega_f(\widetilde{C}_k\mu^{k/2})
\end{equation}
and we now wish to show that $\widetilde{C}_k$ has a uniform upper bound to complete the proof. In other words, we need to estimate $\prod_{i=1}^{k-1}(1-c_4\sigma_i)$ from below. Note from $1-c_4\sigma_i\geq c_3$ in \eqref{Ak1}, we may bound, by Taylor expansion,
\begin{equation*}
\sum_{i=0}^{k-1}\ln(1-c_4\sigma_i) \geq -c_6'\sum_{i=0}^{k-1}\sigma_i,
\end{equation*}
for some constant $c_6'$ depending only on $n$. By taking the natural log of \eqref{ckexp} and by the simple bound \eqref{estimate2}, we have that
\begin{equation}\label{lnckbound}
\ln(\widetilde{C}_k) \leq C_0 + c_6\sum_{i=0}^{k-1}\sigma_i =:\ln(C_k), \qquad k=1,2,\ldots
\end{equation}
for some constants $C_0$, $c_6$ that depend only on $n$. We now check the summability of $\sigma_i$. By an induction argument, we may re-write $\sigma_k$ as
\begin{equation}\label{sigmakpart}
\sigma_k = (\hat{c}\mu^{1/2})^k\sigma_0 + \frac{1}{\mu}\sum_{i=0}^{k-1}(\hat{c}\mu^{1/2})^{k-i}\delta_i^{1/2}, \qquad k=1,2,\ldots
\end{equation}
Then, by the assumption on $\mu$ so that $\hat{c}\mu^{1/2}<1/3$ and hence $\{(\hat{c}\mu^{1/2})^i\}$ is summable,
\begin{equation}
   \sum_{i=0}^k\sigma_i < \frac{\sigma_0}{1-\hat{c}\mu^{1/2}} + \frac{\hat{c}}{\mu^{1/2}(1-\hat{c}\mu^{1/2})}\sum_{j=0}^{k-1}\delta_j^{1/2},\label{sumbound1}
\end{equation}
We now investigate the summability of $\delta_k^{1/2}$. We have already proven the upper bounds \eqref{nonuniformdelta} and \eqref{lnckbound}. Then, by the monotonicity of modulus of continuity and $\mu<1$,
\begin{align*}
  \sum_{j=1}^{k-1}\delta_j^{1/2} &\leq \sum_{j=1}^{k-1}\omega_f^{1/2}(C_j\mu^{j/2}) <\sum_{j=1}^{k-1}\omega_f^{1/2}(C_k\mu^{j/2}) \nonumber\\
  &\leq \frac{1}{\ln \mu^{-1/2}}\sum_{j=1}^{k-1} \int_{\mu^{j/2}}^{\mu^{(j-1)/2}}\frac{\omega_f^{1/2}(C_kr)}{r}\mathrm{d}r \nonumber\\
  &\leq \frac{1}{\ln\mu^{-1/2}}\int_0^{C_k}\frac{\omega_f^{1/2}(s)}{s}\mathrm{d}s.
\end{align*}
 Now, returning to $C_k$, we combine \eqref{lnckbound} \& \eqref{sumbound1} with the above to obtain
\begin{equation}\label{Ckbound}
\ln C_k \leq C_\mu'+C_\mu\int_0^{C_k}\frac{\omega_f^{1/2}(s)}{s}\mathrm{d}s,
\end{equation}
with constants $C_\mu = \frac{\hat{c}_2/2}{\mu^{1/2}(1-\hat{c}\mu^{1/2})\ln\mu^{-1/2}}$ and $C_\mu' = C_0 + \frac{\hat{c}_3/2}{1-\hat{c}\mu^{1/2}}$ independent of $k$. Clearly $\{C_k\}$ is an increasing sequence. We then have the following:
\begin{align*}
    \int_0^{C_k}\frac{\omega_f^{1/2}(s)}{s}\mathrm{d}s &= \int_0^1\frac{\omega_f^{1/2}(s)}{s}\mathrm{d}s + \int_1^{C_k}\frac{\omega_f^{1/2}(s)}{s}\mathrm{d}s \\
    &\leq ||f||^{1/2}_{\mathcal{C}^{1/2}} + \delta_0^{1/2}\ln C_k,
\end{align*}
where the second inequality follows from the definition of $\delta_0$ and the fact that modulus of continuity is defined as constant when the argument exceeds the diameter of the domain. Combining the above with \eqref{Ckbound} we have that
\begin{equation}
\ln C_k \leq C_\mu' +C_\mu||f||_{\mathcal{C}^{1/2}}^{1/2} + C_\mu\delta_0^{1/2}\ln C_k,\label{Ckbound2}
\end{equation}
Noting that the assumption on $\delta_0$ implies $C_\mu\delta_0^{1/2}\leq1/2$, we have that $C_k$ is uniformly bounded, and by \eqref{nonuniformdelta}, \eqref{lnckbound} the first half of \eqref{deltaomega} follows, with $K$ given by 
\begin{equation}\label{finalconstant}
K := C_1\exp\left(C_2||f||_{\mathcal{C}^{1/2}}^{1/2}\right),
\end{equation}
where
\begin{equation*}
    C_1 = \hat{c}_0\exp\left(\frac{\hat{c}_3}{(1-\hat{c}\mu^{1/2})}\right), \quad C_2 = \frac{\hat{c}_2}{\mu^{1/2}(1-\hat{c}\mu^{1/2})\ln \mu^{-1/2}}
\end{equation*}
for positive constants $\hat{c}_0, \hat{c}_2,\hat{c}_3$ that only depend on $n$. The second half of \eqref{deltaomega} is due to \eqref{Ak1}, the definition of $C_k$ and the bound \eqref{finalconstant} on $C_k$ that we have just proven.
\end{proof}
We remark on the optimality of the index $1/2$ in $||f||_{\mathcal{C}^{1/2}}$ of \eqref{finalconstant}, which is in contrast to the condition of Jian \& Wang in \cite{XJW2}, namely $||f||_{\mathcal{C}^1}<\infty$. First, in view of the first inclusion of \eqref{boundarycontainment} and hence the requirement that the quantity in \eqref{B1} be less than $\mu$, it is necessary to have $-2\hat{c}\mu^{1/2}\tau +C_4C'\delta^{1/2}\mu^{1/2}\leq 0$, namely $\tau$ should be \textit{at least} of order $\delta^{1/2}$ and thus in the conclusion of Lemma \ref{Stepk}, $\sigma'$ must be at least of order $\mu^{-1/2}\delta^{1/2}$. This leads to the $1/2$ powers of various indexed $\delta$s in the proof of Lemma \ref{bigresult1} where Lemma \ref{Stepk} was applied, in particular in the recursion \eqref{sigmak} and subsequently \eqref{sigmakpart}, \eqref{sumbound1}. Since $1>\delta_0\geq \delta_1\geq \ldots$ tending to zero and it is an upper bound of $\sum_i\sigma_i$ that we were seeking, such $1/2$ power is the highest value allowed. In view of \eqref{deltaomega}, this means we cannot raise the $1/2$ power in \eqref{Ckbound2} and hence the necessity in requiring $||f||_{\mathcal{C}^{1/2}}<\infty$.
It is worth noting that the assumption that $f$ is sufficiently close to 1 here so that we may use the results of Gutierrez \cite{CG1} to keep $v$ sufficiently close to a quadratic function. If we only have the more general condition of $\lambda \leq f \leq \Lambda$ then we may have to transform the sections using a very eccentric affine transformation, and then we need the more general bounds of Figalli \& Mooney \cite{AFCM1}. \\ \\
We finish this section by stating a lemma (inspired by Maldonado \cite{DM1}) which confirms that dilated sections are well-separated. \\
\begin{lemma}\label{Separation}
Let $\Omega \subset \mathbb{R}^n$ be fixed. Let $v$ satisfy \eqref{MA1} - \eqref{MA3} in $\Omega$. Assume that there exists $x_0 \in \Omega$ with $\nabla v(x_0) = 0$ and given $h>0$ such that $S_{h,v}\subset \subset \Omega$. Let $T:\mathbb{R}^n \rightarrow \mathbb{R}^n$ be an affine transformation normalising $S_{h,w}$. Then, for every $\lambda \in (0,1)$ we have the estimate
\begin{equation}\label{separatesets}
\text{dist}(S_{\lambda h,v}, \partial S_{h,v}) \geq C(1-\lambda)^n||T||^{-1},
\end{equation}
where $||T||$ denotes the standard matrix (operator) norm of $T$.
\end{lemma}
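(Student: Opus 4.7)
The plan is to transport the problem to a normalised setting via $T$, apply Alexandrov's Maximum Principle there, and then push the resulting distance lower bound back through $T^{-1}$. Affine invariance of the Monge-Amp\`ere equation makes the transfer clean, since $\|T\|$ enters only through the Lipschitz constant of the forward map.

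First, write $T(x) = A(x-c)$ for the linear part $A$ and centre $c$, and define the normalised potential
\[
 u(y) := v(T^{-1}y) - v(x_0) - h, \qquad y \in \widetilde{\Omega} := T(S_{h,v}).
\]
Then $u\in C^2(\widetilde\Omega)\cap C(\overline{\widetilde\Omega})$ is convex, vanishes on $\partial\widetilde\Omega$, and the change of variables gives $\det \mathrm{D}^2 u(y) = |\det A|^{-2}f(T^{-1}y)$. Since $\widetilde\Omega$ is normalised, its volume is bounded above and below by constants depending only on $n$, while Lemma \ref{volumeestimate} gives $|S_{h,v}|\sim h^{n/2}$, forcing $|\det A|\sim h^{-n/2}$. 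Combined with $f\leq 1+\varepsilon$, this yields
\[
 \int_{\widetilde\Omega} \det \mathrm{D}^2 u\, dy \;=\; |\det A|^{-1}\int_{S_{h,v}} f\, dx \;\leq\; Ch^n.
\]

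Next, Alexandrov's Maximum Principle applied to $u$ on $\widetilde\Omega$, together with $\mathrm{diam}(\widetilde\Omega)\leq 2$, gives
\[
 |u(y)|^n \;\leq\; C\,\mathrm{dist}(y,\partial\widetilde\Omega)\int_{\widetilde\Omega}\det \mathrm{D}^2 u \;\leq\; C h^n\, \mathrm{dist}(y,\partial\widetilde\Omega),
\]
for any $y\in\widetilde\Omega$. For $x\in S_{\lambda h,v}$ we have $v(x)-v(x_0) < \lambda h$, so $u(Tx) < -(1-\lambda)h$; substituting $y = Tx$ into the above and cancelling $h^n$ yields $\mathrm{dist}(T(S_{\lambda h,v}),\partial\widetilde\Omega) \geq C^{-1}(1-\lambda)^n$.

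Finally, since $|Tz_1-Tz_2| = |A(z_1-z_2)| \leq \|A\|\,|z_1-z_2|$, distances in the original coordinates are at least $\|T\|^{-1}$ times those in the normalised frame. Applying this to $S_{\lambda h,v}$ and $\partial S_{h,v}$ yields \eqref{separatesets}. The main obstacle is really the bookkeeping of powers of $h$ through the change of variables: the match between the $h^n$ arising from $\int\det \mathrm{D}^2 u$ and the $h^n$ on the left-hand side of Alexandrov is what leaves a clean $(1-\lambda)^n$, and it hinges on the volume estimate \eqref{volest}; everything else is affine-invariance bookkeeping.
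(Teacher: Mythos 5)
Your proof is correct and takes essentially the same route as the paper: normalise $S_{h,v}$ via $T$, apply the Alexandrov Maximum Principle in the normalised frame, and pull the distance bound back through the Lipschitz constant $\|T\|$. The only cosmetic difference is that you let the powers of $h$ cancel directly through the volume estimate, whereas the paper rescales $v$ by $(\det T)^{2/n}$ (so the normalised function still solves a Monge--Amp\`ere inequality with right-hand side $\leq 1+\varepsilon$) and invokes $h\geq a_n$ from the comparison principle instead.
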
 
\begin{proof}
Define $\Omega^* = S_{h,v}$, and choose $x \in \Omega^*$. Without loss of generality, assume $v=0$ on $\partial \Omega$. \\ \\
Suppose for the moment, that $||T||\equiv 1$. We have that $\left|\min_{\Omega^*}v\right| = h \geq a_n$, the last inequality following from the comparison principle. Let $x' \in \Omega^{**}=:S_{\lambda h,v}$, $\lambda \in (0,1)$ be such that $|x-x'| = dist(S_{\lambda h,v}, \partial S_{h,v})$. By definition of $\Omega^{**}$ Alexandrov's Maximum principle, we have that $|v(x)|^n \geq (1-\lambda)^na_n^n$, and hence obtain \eqref{separatesets} for the case $||T||=1$. \\ \\
If $||T|| \neq 1$ then we define $\hat{v} = (\det T)^{2/n}v(T^{-1}x)$ and apply the same argument to $|Tx - Tx'|$. From there we note that $|x-x'| \geq ||T||^{-1}|Tx-Tx'|$, and the result follows.
\end{proof}

\section{Proof of Theorem \ref{mainresult}}\label{BigProof}
Let $w_k$, $k=1,2,\ldots$ be solutions to the following Monge-Amp\`ere equation
\begin{align}
    \det \mathrm{D}^2w_k &= f(0) = 1 \text{    in   }S_{\mu^k,v}; \label{MAconstant} \\
    w_k &= v \text{   on   }\partial S_{\mu^k,v}. \label{MAboundary}
\end{align} Consider $x \in \Omega$. By strict convexity, there exists $\hat{h} = \hat{h} (\epsilon_0, dist(x,\partial \Omega))$
so that $S_{\hat h, v}(x_0) \subset\subset \Omega$. By Lemma 8, the affine transformation that normalises the section $S_{\hat h, v}(x_0)$ has eccentricity bound that only depends on n and $\hat h (\epsilon_0, dist(x,\partial \Omega))$. Therefore, it suffices to consider a normalised domain $\Omega$ with $v=0$ on $\partial \Omega$ and the minimum of $v$ attained at the origin. Also recall the definitions of $\delta_0$, $\delta_k$ given by \eqref{delta0}, \eqref{deltak} respectively. \\
\begin{proof}[Proof of Theorem \ref{mainresult}]
We adopt the notation of Lemma \ref{bigresult1}, and as $v$ satisfies \eqref{boundedMA} with $\delta = \varepsilon$ we have that $B_{(1-\sigma_1)\sqrt{2}}(0) \subset \mu^{-1/2}A_1S_{\mu,v}\subset B_{(1+\sigma_1)\sqrt{2}}(0)$, with $\sigma_1 :=\hat{c}\mu^{-1/2}\left(\frac{1-n^{-1}}{1+n^{-1}}\mu + \varepsilon^{1/2}\right)\leq 1/3$ and $\mu$ satisfying the required assumption of Lemma \ref{Stepk}. From this we deduce that $S_{\mu^2,v}$ and $S_{\mu,v}$ are well separated by Lemma \ref{Separation} and the fact that $||A_1||^{-1}$ is bounded. We then apply Lemmas \ref{C4Bound} \& \ref{Separation} to conclude that $||w_1||_{C^4(S_{\mu^2,v})} \leq C$.
 Similarly, we have that $||w_2||_{C^4(S_{\mu^3,v})} \leq C$. \\ \\
By considering the sub-and super-solutions $(1 \mp C\delta_1)v$ respectively, we have that
\begin{equation*}
\det \mathrm{D}^2(1-C\delta_1)v \leq \det \mathrm{D}^2w_1 \leq \det\mathrm{D}^2(1+C\delta_1)v \text{  in  } S_{\mu,v}
\end{equation*}
and $w_1 = v$ on the boundary. The comparison principle yields that
\begin{equation*}
(1+C\delta_1)v \leq w_1 \leq (1-C\delta_1)v
\end{equation*}
and hence $||w_1 - v||_{L^\infty(S_{\mu,v})} \leq C\delta_1$, where $C$ is a universal constant and depends only on $n$. We can do a similar calculation to deduce that $||w_2 - v||_{L^\infty(S_{\mu^2,v})} \leq C\delta_2$, and hence $||w_1 - w_2||_{L^\infty(S_{\mu^2,v})} \leq C\delta_1$, where we note that $\delta_2 \leq \delta_1$. \\ \\
We must now compare $w_1$ and $w_2$ on appropriate sets so that we may apply Lemma \ref{Lowerorderestimates} to obtain a bound for higher order derivatives. As $\delta_1 <\delta_0$, and $\delta_0$ satisfies \eqref{delta0}, we have that $S_{\mu^2,v} \subset S_{\mu,v}$, whence $||w_1||_{C^4(S_{ \mu^2,v})} \leq C$. We can then use Lemma 5 with eccentricity bounds of Lemma \ref{bigresult1} and Lemma \ref{Lowerorderestimates} to deduce that,
\begin{equation*}
||\mathrm{D}^j(w_1 - w_2)||_{L^\infty (S_{\mu^3,v})} \leq C\delta_1, \quad j=2,3.
\end{equation*}
Here, we have used the fact that the distance from $S_{\mu^3,v}$ to $\partial S_{\mu^2,v}$ is bounded from below due to Lemma \ref{Separation} and eccentricity bounds \eqref{Ak1}, \eqref{Ak2} for the case $k=2$. \\ \\
We will now apply the above arguments to find the difference between higher derivatives of $w_k$ and $w_{k+1}$. Define $\hat{w}_k(x) := \mu^{-(k-1)}w_k(\mu^{(k-1)/2}A_{k-1}^{-1}x)$, $\hat{w}_{k+1} :=\mu^{-(k-1)}w_{k+1}(\mu^{(k-1)/2}A_{k-1}^{-1}x)$. Then, for $\widetilde{v}_{k-1}= \mu^{-(k-1)}v(\mu^{(k-1)/2}A_{k-1}^{-1}x)$, we have that
\begin{align*}
&\left\{
\begin{array}{ll}
\det \mathrm{D}^2\hat{w}_k = 1 &\text{   in   }S_{\mu,\widetilde{v}_{k-1}} \\
\hat{w}_k = \widetilde{v}_{k-1}  &\text{   on   }\partial S_{\mu,\widetilde{v}_{k-1}} \\
\end{array}
\right. \\
&\left\{
\begin{array}{ll}
\det \mathrm{D}^2\hat{w}_{k+1} = 1 &\text{   in   }S_{\mu^2,\widetilde{v}_{k-1}} \\
\hat{w}_{k+1} = \widetilde{v}_{k-1}  &\text{   on   }\partial S_{\mu^2,\widetilde{v}_{k-1}}
\end{array}
\right.
\end{align*}
Note that, by writing $\hat{t}_1 := \mu - \hat{w}_k(x_k)$, $\hat{t}_2 = \mu^2 - \hat{w}_{k+1}(x_{k+1})$, we have that $S_{\hat{t}_1,\hat{w}_k} = S_{\mu,v}$, and $S_{\hat{t}_2,\hat{w}_{k+1}} = S_{\mu^2,v}$. By applying the same argument as above we can conclude that, for $j=2,3$,
\begin{equation*}
||\mathrm{D}^j(\hat{w}_k - \hat{w}_{k+1})||_{L^\infty (S_{\mu^{3},\widetilde{v}_{k-1}})} \leq C\delta_k,
\end{equation*}
and then scaling back and applying eccentricity bounds \eqref{Ak1}, \eqref{deltaomega} of Lemma \ref{bigresult1} give us
\begin{align}
||\mathrm{D}^2w_k - \mathrm{D}^2w_{k+1}||_{L^\infty (S_{\mu^{k+2},v})} &\leq CK\delta_k \label{Est2}\\
||\mathrm{D}^3w_k - \mathrm{D}^3w_{k+1}||_{L^\infty (S_{\mu^{k+2},v})} &\leq CK^{3/2}\mu^{-(k-1)/2}\delta_k. \label{Est3}
\end{align}
Moreover,
\begin{equation}
||\mathrm{D}^2w_1(x) - \mathrm{D}^2w_{k+1}(x)||_{L^\infty (S_{\mu^{k+2},v})} \leq CK\sum_{i=0}^k \delta_i \leq CK\int_{K\mu^{k/2}}^K \frac{\omega_f(r)}{r}\mathrm{d}r.
\end{equation}
We now have that $\{w_k\}$ is a Cauchy sequence in $C^2$, and it converges to $v$ in $C^2$ (see Appendix for a proof of this). Fix $z$ near to the origin, and we now wish to estimate
\begin{align*}
|\mathrm{D}^2v(z) - \mathrm{D}^2v(0)| &\leq I_1 + I_2 + I_3 \\
&:= |\mathrm{D}^2 w_k(0) - \mathrm{D}^2 v(0)| + |\mathrm{D}^2 v(z) - \mathrm{D}^2 w_k(z)| \\ &+ |\mathrm{D}^2 w_k(z) - \mathrm{D}^2 w_k(0)|.
\end{align*}
Let $k \geq 1$ such that $\mu^{k+4} \leq v(z) \leq \mu^{k+3}$. We begin by estimating $I_1$. From \eqref{Est2}, we have that
\begin{equation}\label{I1}
    I_1 \leq C\sum_{j=k}^\infty \delta_j \leq C_1K\int_0^{K|z|}\frac{\omega_f(r)}{r}\mathrm{d}r.
\end{equation}
Now, for $I_2$, let $w_{z,l}$, $l \in \mathbb{N}$ be the solution of
\begin{align}
   \left \{ \begin{array}{ll}
   \det\mathrm{D}^2w_{z,l} &= f(z) \quad\text{   in   }\quad S_{\mu^l,v}(z) \\
    w_{z,l} &= v \quad\text{   on   }\quad\partial S_{\mu^l,v}(z)
    \end{array}
    \right.
\end{align}
Let $l_k := \inf\{l: S_{\mu^l,v}(z) \subset S_{\mu^k,v}\}$. Clearly, $l_k \geq k$.We now wish to show that $l_k \leq k + l_0$, $l_0$ some fixed constant independent of $k$. We make the dilations $x \mapsto \mu^{-k/2}x$ and $v \mapsto \mu^{-k}v$ as before, we may assume that $v(z) \leq \mu^{3}$. From a result due to Caffarelli \cite[Corollary 2]{LC4}, there exists a constant $l_0$ such that
\begin{equation*}
    v(z) - \nabla v(z)\cdot (x-z) \leq v(x) - \mu^{l_0}
\end{equation*}
for any $x \in \partial S_{\mu,v}$. From the definition of a section, we can show that $S_{\mu^{l_0},v}(z) \subset S_{\mu,v}$. Scaling back, we obtain the required result. \\ \\
We now wish to compare $w_k$ snd $w_{z,k+l_0}$. Note that, by Lemma \ref{C4Bound} and our previous claim, we have that $||w_k||_{C^4(S_{\mu^{k+l_0},v}(z))} \leq C$ and, by using our claim and a similar argument as before, we have that $||w_{z,k+l_0} - v||_{L^\infty (S_{\mu^{k+l_0},v}(z))} \leq C\delta_k$. Hence \begin{equation} \label{compoffcentre}
    ||w_{z,k+l_0} - w_k||_{L^\infty (S_{\mu^{k+l_0},v}(z))} \leq C\delta_k.
\end{equation}
We now wish to apply Lemma \ref{Lowerorderestimates} in order to obtain higher derivative estimates in the comparison between $w_k$ and $w_{z,k+l_0}$, but we need the Monge-Amp\`ere equations to have the same right hand side. We do this by multiplying \eqref{MAconstant} by $f(z)$. In other words, we transform $w_k$ by multiplying the solution by $f(z)^{1/n}$ and then trying to obtain a comparison between the modified $w_k$ and $w_{z,k+l_0}$:
\begin{equation} \label{uzuk}
    ||w_{z,k+l_0} - f(z)^{1/n}w_k||_{L^\infty(S_{\mu^{k+l_0},v}(z))} \leq ||w_{z,k+l_0} - w_k||_{L^\infty (S_{\mu^{k+l_0},v}(z))} + \left(|f(z)^{1/n} - f(0)|\right)||w_k||_{L^\infty (S_{\mu^{k+l_0},v}(z))}
\end{equation}
We may then use the definition of $\delta_k$ to deduce that $f(z)^{1/n} \leq (f(0) + \delta_k)^{1/n}$ and by Taylor expansion, for $\xi \in (0,\delta_k)$:
\begin{align*}
f(z)^{1/n} &\leq (f(0) + \delta_k)^{1/n} \\
&= f(0)^{1/n} + \delta_k\left(\frac{1}{n}(f(0) + \xi)^{1/n}\right) \\
&\leq f(0)^{1/n} + C\delta_k.
\end{align*}
From the above and \eqref{compoffcentre} we have that
\begin{equation*}
 ||w_{z,k+l_0} - f(z)^{1/n}w_k||_{L^\infty(S_{\mu^{k+l_0},v}(z))} \leq C\delta_k.
 \end{equation*}
 We then use Lemma \ref{Lowerorderestimates} to deduce that \begin{equation}\label{I21}
     |\mathrm{D}^2w_{z,k+l_0}(z) - f(z)^{1/n}\mathrm{D}^2w_k(z)| \leq CK\delta_k.
 \end{equation}
 From this and by a similar calculation to the one that follows \eqref{uzuk}, we have that
 \begin{equation}
  |\mathrm{D}^2w_{z,k+l_0}(z) - \mathrm{D}^2w_k(z)| \leq CK\delta_k. \end{equation}
In a similar way to \eqref{I1} we obtain
\begin{equation}\label{I22}
|\mathrm{D}^2v(z) - \mathrm{D}^2w_{z,k+l_0}(z)| \leq CK\sum_{j=k+l_0}^\infty \delta_j \leq C_1K\int_{0}^{K|z|}\frac{\omega_f(r)}{r}\mathrm{d}r.
\end{equation}
Combining \eqref{I21}and \eqref{I22} we obtain an estimate for $I_2$. \\ \\
Finally, we estimate $I_3$. Let $\Delta_j = w_j - w_{j-1}$. Then, by \eqref{Est3}, we have
\begin{equation*}
|\mathrm{D}^2\Delta_j(z) - \mathrm{D}^2\Delta_j(0)| \leq CK^{3/2}\mu^{-(j-2)/2}\delta_j|z|.
\end{equation*}
Hence
\begin{align*}
I_3 &\leq |\mathrm{D}^2w_{k-1}(z) - \mathrm{D}^2w_{k-1}(0)|+ |\mathrm{D}^2\Delta_k(z) - \Delta_k(0)| \\
&\leq |\mathrm{D}^2w_1(z) - \mathrm{D}^2w_1(0)| + \sum_{j=1}^k |\mathrm{D}^2\Delta_j(z) - \mathrm{D}^2\Delta_j(0)| \\
&\leq C_1K^{3/2}|z|\left( \sigma_1 + \mu\sum_{j=1}^k \mu^{-j/2}\delta_j\right) \\
&\leq C_1K^{3/2}|z|\left( \sigma_1 + K\mu\int_{K|z|}^K \frac{\omega_f(r)}{r^2} \mathrm{d}r\right),
\end{align*}
where the penultimate inequality follows from Lemma \ref{C3constant} and the containments in Corollary \ref{Stepk}. Combining our estimates for $I_1, I_2$ and $I_3$ gives us the required result.
\end{proof}

\newpage
\appendix
\section{Appendix}
Here we prove that our constant right hand side solutions $w_k$ defined on sections of $v$ converge to $v$ as $k\rightarrow \infty$:
\begin{proposition}
Let $w_k$, $v$ be solutions of systems \eqref{MAconstant}-\eqref{MAboundary} and \eqref{MA1}-\eqref{MA3} respectively. If the assumptions of Lemma \ref{bigresult1} are satisfied, then, we have that $|\mathrm{D}^2w_k(0) - \mathrm{D}^2v(0)|\rightarrow 0$ as $k\rightarrow \infty$.
\end{proposition}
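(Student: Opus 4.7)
The plan is to compare $w_k$ to the quadratic model coming from the second-order Taylor polynomial of $v$ at the origin. Since $v$ attains its minimum at $0$, we have $\nabla v(0)=0$, and $\det \mathrm{D}^2 v(0)=f(0)=1$, so the polynomial $P(x) := v(0)+\tfrac{1}{2}x^T \mathrm{D}^2 v(0)\, x$ is a strictly convex solution of $\det \mathrm{D}^2 P = 1$ on all of $\mathbb{R}^n$. The key observation is that, because $v\in C^2$, Taylor's theorem gives $v(x)-P(x)=o(|x|^2)$ as $x\to 0$, while Lemma \ref{bigresult1} (together with the uniform upper and lower bounds on $A_k$ from \eqref{Ak1} and \eqref{deltaomega}) forces $S_{\mu^k,v}\subset B_{c\mu^{k/2}}(0)$ for a constant $c$ independent of $k$. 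Combining these, on $\partial S_{\mu^k,v}$ we get $|v-P|\leq \epsilon_k \mu^k$ for some sequence $\epsilon_k\to 0$.

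Next I would apply the comparison principle to $w_k$ and $P$. Both are convex with $\det\mathrm{D}^2 = 1$ in $S_{\mu^k,v}$, and $w_k=v$ on the boundary, so the above boundary bound propagates to $\|w_k-P\|_{L^\infty(S_{\mu^k,v})}\leq \epsilon_k \mu^k$. Now rescale, setting $\hat{w}_k(x) := \mu^{-k}(w_k(\mu^{k/2}A_k^{-1}x)-\mu^k)$ and $\hat{P}_k(x):=\mu^{-k}(P(\mu^{k/2}A_k^{-1}x)-\mu^k)$ on $\widetilde{\Omega}_k$. Then both satisfy $\det \mathrm{D}^2 = 1$ on $\widetilde{\Omega}_k$, while
\begin{equation*}
\|\hat{w}_k-\hat{P}_k\|_{L^\infty(\widetilde{\Omega}_k)}\leq \epsilon_k, \qquad \mathrm{D}^2\hat{P}_k \equiv A_k^{-T}\mathrm{D}^2 v(0)\,A_k^{-1}.
\end{equation*}

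Since $\sigma_k\leq 1/3$ and $\widetilde{\Omega}_k$ is sandwiched between $B_{(1-\sigma_k)\sqrt{2}}$ and $B_{(1+\sigma_k)\sqrt{2}}$, Lemma \ref{C4Bound} supplies a uniform bound $\|\hat{w}_k\|_{C^4(\Omega')}\leq C$ on a fixed neighborhood $\Omega'$ of the origin, while $\hat{P}_k$ is a quadratic polynomial with Hessian bounded by the eccentricity control on $A_k$. Lemma \ref{Lowerorderestimates} then yields $|\mathrm{D}^2 \hat{w}_k(0)-A_k^{-T}\mathrm{D}^2 v(0)\,A_k^{-1}|\leq C\epsilon_k$. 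Scaling back via the identity $\mathrm{D}^2 w_k(0)=A_k^T \mathrm{D}^2 \hat{w}_k(0)\,A_k$ and using the uniform bound $\|A_k\|\leq K^{1/2}$ from \eqref{deltaomega} gives $|\mathrm{D}^2 w_k(0)-\mathrm{D}^2 v(0)|\leq CK\epsilon_k\to 0$.

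The main technical point is ensuring $\epsilon_k\to 0$, which relies on two ingredients working together: the $C^2$ hypothesis on $v$ (so that the Taylor remainder is genuinely $o(|x|^2)$), and the uniform eccentricity bound $S_{\mu^k,v}\subset B_{c\mu^{k/2}}$ of Lemma \ref{bigresult1}, so that as $k\to\infty$ the boundary points of the section approach the origin at a controlled rate. Once this is established, the rest is a routine application of the comparison principle and the constant-right-hand-side derivative estimates already developed in Section \ref{Knownstuff}.
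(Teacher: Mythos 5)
Your proof is correct, but it follows a genuinely different route from the paper's. The paper rescales to $\hat{w}_k,\hat{v}_k$ on the normalised domain $\widetilde{\Omega}_k$ and then proves \emph{two} separate convergences to the identity: $|\mathrm{D}^2\hat{w}_k(0)-I|\leq C\sigma_k\to 0$ via Lemma \ref{C3constant} applied to the nearly-round domain $\widetilde{\Omega}_k$, and $|\mathrm{D}^2\hat{v}_k(0)-I|\to 0$ by combining $v\in C^2$ with the inclusion bounds \eqref{Ak2} and a boundary Taylor expansion along unit directions $\xi$; it then takes the triangle inequality and transforms back. You instead bypass the identity altogether: you compare $w_k$ directly to the second-order Taylor polynomial $P$ of $v$ at the origin (noting $\nabla v(0)=0$ and $\det\mathrm{D}^2 v(0)=f(0)=1$, so $P$ is an exact solution), use $v\in C^2$ plus the uniform containment $S_{\mu^k,v}\subset B_{K^{1/2}\mu^{k/2}}$ to get $|v-P|\leq\epsilon_k\mu^k$ on $\partial S_{\mu^k,v}$, push this into the interior with the comparison principle applied to $P\pm\epsilon_k\mu^k$, rescale, and feed the resulting $L^\infty$ smallness into Lemma \ref{Lowerorderestimates} to obtain the Hessian estimate at $0$. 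Both arguments rest on the same two pillars ($v\in C^2$ and the uniform eccentricity bounds of Lemma \ref{bigresult1}), but your version avoids proving $\mathrm{D}^2\hat{v}_k(0)\to I$ as an intermediate step, at the cost of appealing once more to the comparison principle and to Lemma \ref{Lowerorderestimates}. One small point of hygiene: when you invoke Lemma \ref{C4Bound} for $\hat{w}_k$ (and when you invoke Lemma \ref{Lowerorderestimates} with the hypothesis $\|w_l\|_{C^4}\leq C_0$), you should subtract the common constant $\mu^{-k}v(0)$ from both $\hat{w}_k$ and $\hat{P}_k$ so that $\hat{w}_k$ vanishes on $\partial\widetilde{\Omega}_k$ and both rescaled functions are uniformly $C^4$-bounded; since this does not affect the difference or any derivatives, the argument is unchanged.
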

\begin{proof}
First recall that the sections $S_{\mu^k,v}\rightarrow \{0\}$ as $k\rightarrow \infty$ as $0$ is the minimum point of $v$. We consider $\hat{w}_k(x)=\mu^{-k}w_k(\mu^{-k/2}A_k^{-1}x)$ and $\hat{v}_k(x) = \mu^{-k}v(\mu^{-k/2}A_k^{-1}x)$. Then, we have that
\begin{align*}
    \det\mathrm{D}^2\hat{w}_k(x) &=1 \quad\text{in }\mu^{-k/2}A_kS_{\mu^k,v} \\
    \hat{w}_k&=\hat{v}_k \quad\text{on }\partial\mu^{-k/2}A_kS_{\mu^k,v}.
\end{align*}
From Lemma \ref{bigresult1} we have that $B_{(1-\sigma_k)\sqrt{2}}(0)\subset \mu^{-k/2}A_kS_{\mu^k,v} \subset B_{(1+\sigma_k)\sqrt{2}}(0)$, and hence by Lemma \ref{C3constant} and the fact the sum of all $\sigma_k$s is finite due to the uniform bound \eqref{finalconstant} for $C_k$ which is defined in \eqref{lnckbound} we have that, as $k \rightarrow \infty$, 
\begin{equation}\label{wkid}
|\mathrm{D}^2\hat{w}_k(0)-I|\leq C\sigma_k \rightarrow 0.
\end{equation}
Moreover, as $v\in C^2(\Omega)$ we have that
\begin{equation}
|\mathrm{D}^2\hat{v}_k(x) - \mathrm{D}^2\hat{v}_k(0)| \rightarrow 0
\end{equation}
as $k\rightarrow \infty$. Next, let $\xi \in \Omega \subset \mathbb{R}^n$ be such that $|\xi|=1$, and let $\theta_{\pm}=(1\pm\sigma_k)\sqrt{2}$. We then have that
\begin{equation}
(1-\sigma_k)^2 \leq \frac{1}{2}\xi^T\mathrm{D}^2\hat{v}_k(\theta_\pm\xi)\xi \leq (1+\sigma_k)^2
\end{equation}
from a Taylor expansion and \eqref{Ak2}. Noting that this holds for arbitrary unit vector $\xi$, we must have that $|\mathrm{D}^2\hat{v}_k -I|\rightarrow 0$ as $k\rightarrow \infty$. Combining this with \eqref{wkid} gives us that
\begin{equation}
|\mathrm{D}^2\hat{w}_k - \mathrm{D}^2\hat{v}_k| \rightarrow 0 \quad \text{as}\quad k\rightarrow \infty.
\end{equation}
The result then follows by transforming back under $A_k$ and noting that it is bounded from Lemma \ref{bigresult1}.
\end{proof}

\end{document}